\newcommand{\R}{\mathbb{R}}                                     
\newcommand{\pd}[2]{\frac{\partial#1}{\partial#2}}              
\newcommand{\ddt}{\tfrac{\text{\normalfont d}}{\text{\normalfont d}t}} 
\newcolumntype{C}[1]{>{\centering\let\newline\\\arraybackslash\hspace{0pt}}m{#1}}
\newtheorem{theorem}{Theorem}[section]
\newtheorem{lemma}[theorem]{Lemma}
\theoremstyle{definition}
\newtheorem{example}[theorem]{Example}
\newtheorem{remark}[theorem]{Remark}
\newcommand{\cF}{\mathcal{F}}
\newcommand{\eps}{\varepsilon}
\newcommand{\setdef}[2]{\left\{\, #1 \left|\, \vphantom{#1} #2\right.\right\}}
\DeclareOldFontCommand{\rm}{\normalfont\rmfamily}{\mathrm}
\renewcommand*\env@matrix[1][*\c@MaxMatrixCols c]{%
  \hskip -\arraycolsep
  \let\@ifnextchar\new@ifnextchar
  \array{#1}}
\def\blfootnote{\gdef\@thefnmark{}\@footnotetext}
\newenvironment{smallpmatrix}
{\left(\begin{smallmatrix}}
{\end{smallmatrix}\right)}
\begin{document}

\begin{frontmatter}

\title{Funnel Control for Langevin Dynamics}

\author[1]{Thomas Berger}\ead{thomas.berger@math.upb.de}
\author[2]{Feliks Nüske}\ead{nueske@mpi-magdeburg.mpg.de}
\address[1]{Universit\"at Paderborn, Institut f\"ur Mathematik, Warburger Str.~100, 33098~Paderborn, Germany}
\address[2]{Max Planck Institute for Dynamics of Complex Technical Systems, Magdeburg, Germany}

\begin{keyword}
stochastic differential equations;
Langevin dynamics;
adaptive control;
funnel control.
\end{keyword}

\begin{abstract}
We study tracking control for stochastic differential equations of Langevin type and describe a new conceptual approach to the sampling problem for those systems. The objective is to guarantee the evolution of the mean value in a prescribed performance funnel around a given sufficiently smooth reference signal. To achieve this objective we design a novel funnel controller and show its feasibility under certain structural conditions on the potential energy. The control design does not require any specific knowledge of the shape of the potential energy. We illustrate the results by a numerical simulation for a double-well potential.
\end{abstract}

\end{frontmatter}

\section{Introduction}
\label{sec:introduction}
In this paper, we study the applicability of the funnel controller to stochastic differential equations (SDEs) of Langevin type. The funnel controller was developed in the seminal work~\cite{IlchRyan02b} (see also the recent survey in~\cite{BergIlch21}) and is a low-complexity model-free output-error feedback of high-gain type. Since it only requires knowledge of some structural properties of the system, but not of any specific system parameters, the funnel controller is inherently robust and hence suitable for applications in highly uncertain plants or environments. It proved advantageous in a variety of applications such as control of industrial servo-systems~\cite{Hack17}, underactuated multibody systems~\cite{BergDrue21}, peak inspiratory pressure~\cite{PompWeye15} and adaptive cruise control~\cite{BergRaue20}.

SDEs are routinely used to model dynamical systems subject to uncertainties all across the natural and engineering sciences~\cite{Okse03,Evans2012}, with applications including financial markets, atmospheric dynamics, and molecular dynamics. A specific class of SDEs are Langevin dynamics, where the drift is given as the negative gradient of a scalar energy function, while the diffusion is constant. Langevin dynamics itself have been used in many different contexts, but perhaps most prominently, it constitutes a popular dynamical model for molecular systems. The main reason is that, under mild assumptions, its associated invariant measure is the Boltzmann distribution, which is an object of central importance in statistical physics. Therefore, long trajectories of Langevin dynamics can be used to estimate expectation values with respect to the Boltzmann distribution. It should be noted, however, that dynamical quantities derived from Langevin dynamics have also attracted significant attention in molecular modeling, see for example~\cite{LeliStol16} for a mathematical review of this topic. Langevin dynamics can also be obtained as the high-friction limit of underdamped Langevin dynamics, which is frequently used to model molecular systems as Hamiltonian dynamics coupled to a stochastic environment~\cite{Stoltz2010}.

A major impediment to the use of Langevin dynamics in applications is metastability, meaning that due to the presence of multiple local minima of the energy, separated by sharp barriers, the dynamics tends to spend long times oscillating around the same configuration, severely slowing down the process of sampling the Boltzmann measure. To circumvent this so called \emph{sampling problem}, a wide variety of different numerical approaches have been developed~\cite{Rohrdanz2013,Sidky2020}, but the problem remains essentially open to this day. In the literature, approaches to the sampling problem based on optimal control have also been developed~\cite{Hartmann2012}, but a priori knowledge of the system is often required in order to achieve satisfactory performance. Against this backdrop, we perform a theoretical study of the funnel controller in this context, which 
does not require any information about the system parameters. Our main result provides \textit{structural} conditions on the system and design parameters such that solutions of the controlled SDE are guaranteed to exist and to achieve the control objective. ``Structural'' means that for fixed controller design parameters the funnel controller achieves the objective for a whole class of systems~-- this class is either empty or contains an open ball. Although many more questions remain open, this result provides a theoretical basis for a new angle to tackle the long-standing sampling problem for metastable systems.

Control of stochastic systems under state or tracking error constraints is considered in other works in the literature~\cite{SuiChen21,SuiShao15,ZhanXia18}. However, all of these works assume a special feedback structure for the system and the output is a stochastic process~-- for which it is not clear how it can be measured. Tracking with prescribed performance for the mean value of SDEs has not been considered so far.

The present paper is organized as follows. In Section~\ref{sec:prerequisites} we present a precise statement of the considered tracking problem, including the assumptions on the considered class of SDEs. Some existence and uniqueness results for SDEs are recalled in Section~\ref{Sec:LangDyn} and a differential equation for certain mean values is derived. The main result on tracking by funnel control for SDEs is stated and proved in Section~\ref{sec:feasibility}. The latter means to show the existence and uniqueness of a solution to a time-varying nonlinear SDE with a singularity on the right-hand side. This result is illustrated by a simulation of a double-well potential in Section~\ref{sec:examples}. It is rigorously shown that this example satisfies the assumptions of the SDE system class. The paper concludes with Section~\ref{sec:Conclusion}.

\paragraph{Nomenclature} In the following let $\R$ denote the real numbers, $\R_{\ge 0} = [0,\infty)$ and $\R^{m\times n}$ the set of matrices of size $m\times n$. $L^\infty(I,\R^n)$ is the Lebesgue space of measurable, essentially bounded functions $f:I\to\R^n$, where $I\subseteq \R$ is some interval, with norm $\|\cdot\|_\infty$. $W^{k,\infty}(I,\R^n)$ is the Sobolev space of all functions  $f:I\to\R^n$ with $k$-th order weak derivative $f^{(k)}$ and $f, \dot f, \ldots, f^{(k)}\in L^\infty(I,\R^n)$. $C^k(M,\R^n)$ is the set of $k$-times continuously differentiable functions $f:M\to\R^n$, where $M\subseteq \R^m$. By $\mathbb{E}[X]$ we denote the mean value of a random variable~$X:\Omega\to\R$, where $\Omega$ denotes the sample space of a probability space $(\Omega,\cF,P)$.

\section{Problem statement}
\label{sec:prerequisites}

\subsection{System class}
We consider the controlled stochastic process with dynamics given by the stochastic differential equation (SDE, cf.~\cite[Sec.~11]{Okse03})
\begin{equation}
\label{eq:sde}
{\rm d} X_t = -\big(\nabla V(X_t) +  A(X_t - u(t)) \big)\,{\rm d}t + \sqrt{2}\,{\rm d}B_t,
\end{equation}
where $X_t:\Omega\to\R^d$, $t\ge 0$, are random vectors and $\Omega$ is the sample space of a probability space $(\Omega,\cF,P)$. $(B_t)_{t\ge0}$ is $d$-dimensional Brownian motion (a Wiener process with zero mean value and unit variance), $V : \mathbb{R}^d \mapsto \mathbb{R}$ is the \emph{potential energy} and $A \in \mathbb{R}^{d \times d}$ is a state-independent symmetric positive definite matrix. The function $u:\R_{\ge 0}\to\R^d$ is the control input.

The process described by Eq.~\eqref{eq:sde} is known as \emph{Langevin dynamics} defined by the potential $V$, subject to an additional forcing due to the gradient of a time-dependent quadratic \emph{biasing potential}
\[V_b(x, t) = \frac{1}{2}(x - u(t))^\top A (x - u(t)). \]
The matrix~$A$ can be viewed as a design parameter for this biasing potential and will later be tuned to achieve feasibility of the to-be-designed feedback controller.
We make the following assumptions on the potential~$V$ and the matrix~$A$.
\begin{enumerate}
\item[$\mathbf{(A1)}$] $V\in C^2(\R^d,\R)$, $V(x)\ge 0$ for all $x\in\R^n$, and $\nabla V$ is globally Lipschitz continuous,
\item[$\mathbf{(A2)}$] $\exists \, c_1, c_2>0\ \forall\, x\in\R^d:\ \Delta V(x) + \tr A - \|\nabla V(x) + Ax\|^2 \le -c_1 (V(x) + \tfrac12 x^\top A x) + c_2,$
\item[$\mathbf{(A3)}$] $\exists\, c_3, c_4>0\ \forall\, x\in\R^d:\ \|\nabla V(x)\| \le c_3\big( V(x) + \tfrac12 x^\top A x\big) + c_4$.
\end{enumerate}
Assumption~(A1) is common and ensures existence of a solution to the uncontrolled equation (i.e., $u=0$), cf.\ also Section~\ref{Sec:LangDyn}. Assumption~(A2) resembles a global version of the growth condition in~\cite[Thm.~2.5]{LeliStol16}, which is used there to derive a Poincar\'{e} inequality. Assumption~(A3) essentially means that~$V$ exhibits at most exponential growth, where the growth rate may even depend on~$A$. The assumptions~(A1)--(A3) are generally easy to satisfy and always hold for quadratic potentials~$V$ as shown in the following example.

\begin{example}\label{Ex:quad-pot}
Let $A,S\in\R^{d\times d}$ be symmetric and positive definite, $b\in\R^d$ and $f\in\R$ such that
\[
    f \ge \tfrac12 b^\top S^{-1} b
\]
and consider the potential
\[
    V:\R^d\to \R,\ x\mapsto \tfrac12 x^\top S x + b^\top x + f.
\]
We show that (A1)--(A3) are satisfied and calculate the constants $c_1,\ldots,c_4$ explicitly. To this end, let $\lambda_{\min}(M), \lambda_{\max}(M)$ denote the minimal and maximal eigenvalue of a matrix $M\in\R^{d\times d}$, resp. Since $V$ attains its minimum at $x^* = -S^{-1} b$ it is clear that (A1) holds, and for (A2) we calculate
\begin{align*}
  & \Delta V(x) + \tr A - \|\nabla V(x) + Ax\|^2 \\
  & = \tr S + \tr A - \|(S+A)x + b\|^2\\
  &= \tr S + \tr A - x^\top (S+A)^2 x - 2 b^\top (S+A) x - \|b\|^2 \\
  &\le \tr S + \tr A - \lambda_{\min}(S+A) x^\top (S+A) x \\
  &\quad + b^\top (c_1 I - 2(S+A)) x - c_1 b^\top x - \|b\|^2\\
  &\le \tr S + \tr A - \lambda_{\min}(S+A) x^\top (S+A) x \\
  &\quad +  \|c_1 I - 2(S+A)\|\, \|b\|\, \|x\| - c_1 b^\top x - \|b\|^2\\
  &= \tr S + \tr A - \lambda_{\min}(S+A) x^\top (S+A) x \\
  &\quad +  \big(2\lambda_{\max}(S+A) - c_1\big) \|b\|\, \|x\| - c_1 b^\top x - \|b\|^2\\
  &\le \tr S \!+\! \tr A \!-\! \lambda_{\min}(S+A) x^\top (S \!+\!A) x \\
    &\quad + \frac{\eps}{2} \|x\|^2 \!+\! \frac{\big(2\lambda_{\max}(S \!+\!A) \!-\! c_1\big)^2 \|b\|^2}{2\eps} \!-\! c_1 b^\top x - \|b\|^2\\
  &\le \tr S \!+\! \tr A \!-\! \left(\lambda_{\min}(S\!+\!A) \!-\! \frac{\eps}{2\lambda_{\min}(S\!+\!A)}\right) x^\top (S\!+\!A) x \\
  &\quad + \left(\frac{\big(2\lambda_{\max}(S+A) - c_1\big)^2}{2\eps} -1\right)\|b\|^2 - c_1 b^\top x \\
  &\overset{\eps = \lambda_{\min}(S+A)^2}{\le} \tr S + \tr A - \tfrac12 \lambda_{\min}(S+A) x^\top (S+A) x \\
  &\quad + \left( \frac{\big(2\lambda_{\max}(S+A) - c_1\big)^2}{2\lambda_{\min}(S+A)^2} - 1\right)\|b\|^2 - c_1 b^\top x \\
  &\le -\frac{c_1}{2}\left( x^\top (S+A) x + 2 b^\top x + 2f\right) + c_2\\
  &=-c_1 (V(x) + \tfrac12 x^\top A x) + c_2
\end{align*}
for $c_1 = \lambda_{\min}(S+A)$ and
\begin{align*}
    c_2 &= \tr S \!+\! \tr A \!+\! \left(\frac{\big(2\lambda_{\max}(S\!+\!A) \!-\! \lambda_{\min}(S\!+\!A)\big)^2}{2\lambda_{\min}(S\!+\!A)^2} \!-\! 1\right)\|b\|^2 \\
    &\quad + \lambda_{\min}(S+A) f,
\end{align*}
where we have used that for $c_1< 2 \lambda_{\min}(S+A)$ the matrix $2(S+A)-c_1 I$ is symmetric and positive definite and hence its spectral norm is given by its maximal eigenvalue $2\lambda_{\max}(S+A) - c_1$. For (A3) we calculate
\begin{align*}
  & \|\nabla V(x)\| = \|Sx+b\| \\
  &\le \lambda_{\max}(S) \|x\| + \|b\| + c_3 \|b\|\, \|x\| + c_3 b^\top x\\
  &= -\frac{c_3}{2} \lambda_{\min}(S+A) \left(\|x\| - \frac{\lambda_{\max}(S) + c_3 \|b\|}{c_3 \lambda_{\min}(S+A)}\right)^2 \\
  &\quad + \frac{c_3}{2} \lambda_{\min}(S+A) \|x\|^2 + \frac{(\lambda_{\max}(S) + c_3 \|b\|)^2}{2 c_3 \lambda_{\min}(S+A)}\\
  &\quad  + \|b\| + c_3 b^\top x\\
  &\le \frac{c_3}{2} x^\top (S+A) x + c_3 b^\top x + c_3 f + c_4\\
  &= c_3\big( V(x) + \tfrac12 x^\top A x\big) + c_4
\end{align*}
for arbitrary $c_3>0$ and $c_4 = \frac{(\lambda_{\max}(S) + c_3 \|b\|)^2}{2 c_3 \lambda_{\min}(S+A)} + \|b\| - c_3 f$.
\end{example}

We associate an output function $y:\R_{\ge 0}\to \R^d$ with~\eqref{eq:sde}, for which we seek to achieve a desired behavior and for which instantaneous measurements are assumed to be available. In virtue of~\cite{AnnuBorz10}, a canonical choice for the output is the mean value $\mathbb{E}[X_t]$, which ``is omnipresent in almost all stochastic optimal control problems considered in the scientific literature''.  Therefore, we define
\begin{equation}\label{eq:output}
  y(t) = \mathbb{E}[X_t] = \begin{pmatrix} \mathbb{E}[(X_t)_1] \\ \vdots\\ \mathbb{E}[(X_t)_d]  \end{pmatrix}.
\end{equation}
In practice, it is hard to calculate the corresponding integrals $\mathbb{E}[(X_t)_i]$ exactly; but they may be approximated by data-driven methods such as Monte Carlo integration.

\subsection{Control objective}\label{Ssec:ContrObj}

The objective is to design an output error feedback strategy $u(t) = F(t,e(t))$, where $e(t)= y(t) - y_{\rm ref}(t)$ for some reference trajectory $y_{\rm ref}\in W^{1,\infty}(\R_{\ge 0};\R^d)$, such that in the closed-loop system the tracking error $e(t)$ evolves within a prescribed performance funnel
\begin{equation*}
\mathcal{F}_{\psi} := \setdef{(t,e)\in\R_{\ge 0} \times\R^d}{\|e\| < \psi(t)},\label{eq:perf_funnel}
\end{equation*}
which is determined by a function~$\psi$ belonging to
\begin{equation*}
\Psi:= \setdef{ \psi\in W^{1,\infty}(\R_{\ge 0};\R)}{\!\!\begin{array}{l} \text{$\psi(t)>0$ for all $t>0$},\\ \liminf_{t\to\infty} \psi(t)>0\end{array}\!\!}.
\label{eq:Psi}
\end{equation*}
By the properties of~$\Psi$ there exists $\lambda>0$ such that $\psi(t)\ge \lambda$ for all $t\ge 0$. Therefore, practical tracking with arbitrary small accuracy $\lambda>0$ can be achieved. The situation is depicted in Fig.~\ref{Fig:funnel}.

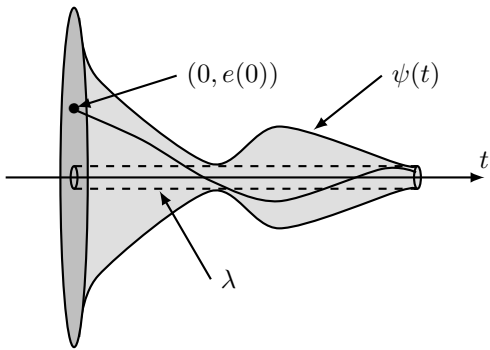
\begin{figure}[h]
\begin{center}
\begin{tikzpicture}[scale=0.45]
\tikzset{>=latex}
  \filldraw[color=gray!25] plot[smooth] coordinates {(0.15,4.7)(0.7,2.9)(4,0.4)(6,1.5)(9.5,0.4)(10,0.333)(10.01,0.331)(10.041,0.3) (10.041,-0.3)(10.01,-0.331)(10,-0.333)(9.5,-0.4)(6,-1.5)(4,-0.4)(0.7,-2.9)(0.15,-4.7)};
  \draw[thick] plot[smooth] coordinates {(0.15,4.7)(0.7,2.9)(4,0.4)(6,1.5)(9.5,0.4)(10,0.333)(10.01,0.331)(10.041,0.3)};
  \draw[thick] plot[smooth] coordinates {(10.041,-0.3)(10.01,-0.331)(10,-0.333)(9.5,-0.4)(6,-1.5)(4,-0.4)(0.7,-2.9)(0.15,-4.7)};
  \draw[thick,fill=lightgray] (0,0) ellipse (0.4 and 5);
  \draw[thick] (0,0) ellipse (0.1 and 0.333);
  \draw[thick,fill=gray!25] (10.041,0) ellipse (0.1 and 0.333);
  \draw[thick] plot[smooth] coordinates {(0,2)(2,1.1)(4,-0.1)(6,-0.7)(9,0.25)(10,0.15)};
  \draw[thick,->] (-2,0)--(12,0) node[right,above]{\normalsize$t$};
  \draw[thick,dashed](0,0.333)--(10,0.333);
  \draw[thick,dashed](0,-0.333)--(10,-0.333);
  \node [black] at (0,2) {\textbullet};
  \draw[->,thick](4,-3)node[right]{\normalsize$\lambda$}--(2.5,-0.4);
  \draw[->,thick](3,3)node[right]{\normalsize$(0,e(0))$}--(0.07,2.07);
  \draw[->,thick](9,3)node[right]{\normalsize$\psi(t)$}--(7,1.4);
\end{tikzpicture}
\end{center}
\caption{Error evolution in a funnel $\mathcal F_{\psi}$ with boundary $\psi(t)$.}
\label{Fig:funnel}
\end{figure}

It is important to note that the function $\psi\in\Psi$ is a design parameter in the control law (stated in Section~\ref{sec:feasibility}) and its choice is up to the designer. Typically, the constraints on the tracking error are due to the specific application, which hence indicates suitable choices for~$\psi$. Although the funnel boundary does not need to be monotonically decreasing in general, it is often convenient to choose a monotone $\psi$. However, widening the funnel over some later time interval may help to reduce the maximal control input and improve the controller performance, for instance in the presence of strongly varying reference signals or periodic disturbances. Typical choices for funnel boundaries are outlined in~\cite[Sec.~3.2]{Ilch13}.

\section{Solutions of the Controlled Langevin Equation}\label{Sec:LangDyn}


First we recall under which conditions on the potential~$V$ and the control input~$u$ the SDE~\eqref{eq:sde} has a unique solution for an admissible initial condition $X_0 = Z$. By an \textit{admissible initial condition} we mean a random variable $Z$, which is independent of the $\sigma$-algebra generated by $B_s$, $s\ge 0$, and such that $\mathbb{E}[\|Z\|^2]< \infty$. By a \textit{solution} of~\eqref{eq:sde} with $X_0 = Z$ for a measurable function $u:\R_{\ge 0}\to\R^d$, we mean a $t$-continuous stochastic process $(X_t)_{t\ge 0}$, adapted to the filtration $\cF_t^Z$ generated by~$Z$ and $B_s$, $s\le t$, which satisfies $\mathbb{E}\left[ \int_0^T \|X_t\|^2 {\rm d}t\right] < \infty$ for all $T>0$, and solves the stochastic integral equation
\[
    X_t = Z - \int_0^t \big(\nabla V(X_s) +  A (X_s-u(s))\big) {\rm d}s + \int_0^t \sqrt{2} \, {\rm d} B_s
\]
for $t\ge 0$. The existence and uniqueness result for the SDE~\eqref{eq:sde} is given in the following, and is a consequence of~\cite[Thm.~5.2.1]{Okse03}.

\begin{lemma}\label{Lem:SDE-sln}
Let $u\in L^\infty(\R_{\ge 0},\R^d)$ and $V\in C^1(\R^d,\R)$ such that $\nabla V$ is globally Lipschitz continuous. Then for any admissible initial condition $X_0 = Z$, the SDE~\eqref{eq:sde} has a unique solution.
\end{lemma}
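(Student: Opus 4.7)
The plan is to obtain the claim as a direct consequence of the cited existence/uniqueness theorem \cite[Thm.~5.2.1]{Okse03}, which asks for (i) a global Lipschitz condition in $x$ (uniform in $t$) on the drift and diffusion coefficients, and (ii) a linear growth condition in $x$ (uniform in $t$) on the same coefficients. So the task reduces to verifying these two hypotheses for the coefficients
\[
b(t,x) = -\nabla V(x) - A(x - u(t)), \qquad \sigma(t,x) = \sqrt{2}\, I_d,
\]
of the SDE~\eqref{eq:sde}. Since $\sigma$ is constant, both conditions are trivial for the diffusion part, so all the work is on $b$.

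For the Lipschitz condition I would let $L>0$ denote the global Lipschitz constant of $\nabla V$ and estimate, for every $t \ge 0$ and $x,y \in \R^d$,
\[
\|b(t,x) - b(t,y)\| \le \|\nabla V(x) - \nabla V(y)\| + \|A(x-y)\| \le (L + \|A\|)\,\|x-y\|,
\]
which is independent of~$t$. For the linear growth condition, the Lipschitz property of~$\nabla V$ gives $\|\nabla V(x)\| \le \|\nabla V(0)\| + L\|x\|$, and since $u \in L^\infty(\R_{\ge 0},\R^d)$ the input contribution is bounded by $\|A\|\,\|u\|_\infty$; combining yields
\[
\|b(t,x)\| \le \|\nabla V(0)\| + L\|x\| + \|A\|\,\|x\| + \|A\|\,\|u\|_\infty \le C(1 + \|x\|)
\]
for $C := \max\{\|\nabla V(0)\| + \|A\|\,\|u\|_\infty,\ L + \|A\|\}$, again uniformly in~$t$.

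Together with admissibility of $Z$ (independence from the $\sigma$-algebra of $(B_s)_{s\ge 0}$ and $\mathbb{E}[\|Z\|^2]<\infty$), these are exactly the hypotheses of~\cite[Thm.~5.2.1]{Okse03}, which then guarantees existence of a unique $t$-continuous, $\cF_t^Z$-adapted solution on each interval $[0,T]$ satisfying $\mathbb{E}\bigl[\int_0^T \|X_t\|^2\,{\rm d}t\bigr] < \infty$, and hence on $\R_{\ge 0}$ by a standard concatenation argument. I do not expect any real obstacle here: the only point deserving care is that the theorem in~\cite{Okse03} is typically stated on a bounded interval with time-measurable coefficients, so one must note that $u$ being merely essentially bounded (rather than continuous) is admissible because all estimates above are uniform in $t$, and that global existence follows by patching together solutions on $[0,T]$ for $T\to\infty$.
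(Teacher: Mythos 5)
Your proposal is correct and follows exactly the route the paper intends: the paper gives no separate argument but states the lemma as a direct consequence of \cite[Thm.~5.2.1]{Okse03}, and your verification of the global Lipschitz and linear growth conditions for the drift $-\nabla V(x)-A(x-u(t))$ (with constant diffusion) is precisely the routine check that citation presupposes. The remarks on measurability of $u$ and extension from $[0,T]$ to $\R_{\ge 0}$ are appropriate and do not change the substance.
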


Essentially the same result holds for the slightly modified SDE
\begin{equation}
\label{eq:sde-mod}
{\rm d} X_t = -\big(\nabla V(X_t) +  A \big(X_t + d(t,\mathbb{E}[X_t])\big) \big)\,{\rm d}t + \sqrt{2}\,{\rm d}B_t,
\end{equation}
where $d:\R_{\ge 0}\times \R^d\to\R^d$ is a measurable and essentially bounded function. Solutions of~\eqref{eq:sde-mod} are defined analogously to~\eqref{eq:sde}. The proof is a straightforward modification of that of~\cite[Thm.~5.2.1]{Okse03}, using the boundedness of~$d$.

\begin{lemma}\label{Lem:SDE-mod-sln}
Let $d\in L^\infty(\R_{\ge 0}\times \R^d,\R^d)$ and $V\in C^1(\R^d,\R)$ such that $\nabla V$ is globally Lipschitz continuous. Then for any admissible initial condition $X_0 = Z$, the SDE~\eqref{eq:sde-mod} has a unique solution.
\end{lemma}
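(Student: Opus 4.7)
The plan is to adapt Oksendal's Picard-iteration proof of Theorem~5.2.1 (which is invoked for Lemma~\ref{Lem:SDE-sln} above) to accommodate the mean-field-type coupling introduced by the drift summand $A\,d(t,\mathbb{E}[X_t])$. The structural observation that makes this modification essentially routine is that, although $d(t,\mathbb{E}[X_t])$ depends on the a priori unknown solution, it is bounded uniformly by $\|d\|_\infty$ and therefore behaves exactly like an admissible bounded control input in the sense of \eqref{eq:sde}.

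Fixing any horizon $T>0$, I would construct Picard iterates on $[0,T]$ by $X_t^{(0)} \equiv Z$ and
\begin{equation*}
X_t^{(k+1)} = Z - \int_0^t \big(\nabla V(X_s^{(k)}) + A X_s^{(k)} + A\,d(s,\mathbb{E}[X_s^{(k)}])\big)\,ds + \sqrt{2}\, B_t.
\end{equation*}
The first step is an induction on $k$ showing that each iterate is $\cF_t^Z$-adapted, $t$-continuous, square-integrable, and satisfies $\sup_{0\le t\le T}\mathbb{E}\|X_t^{(k)}\|^2 \le C_T$ uniformly in $k$: this uses the Lipschitz/linear-growth bound on $\nabla V + A(\cdot)$ from (A1) for the first two drift summands and the uniform bound $\|A\|\,\|d\|_\infty$ for the third, combined with It\^o's isometry and Gronwall. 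In particular $m_k(t) := \mathbb{E}[X_t^{(k)}]$ is a well-defined continuous function of $t$, so the next iterate is meaningful.

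The heart of the argument is the Cauchy estimate. Setting $\Delta_t^{(k)} := X_t^{(k+1)} - X_t^{(k)}$, the Brownian increments cancel in the difference, so the global Lipschitz bound on $\nabla V + A(\cdot)$ together with Cauchy-Schwarz and Jensen's inequality (the latter in the form $\|m_k(s) - m_{k-1}(s)\|^2 \le \mathbb{E}\|\Delta_s^{(k-1)}\|^2$) yields
\begin{equation*}
\mathbb{E}\|\Delta_t^{(k)}\|^2 \le C\!\int_0^t\!\mathbb{E}\|\Delta_s^{(k-1)}\|^2\,ds + C\!\int_0^t\!\|d(s,m_k(s)) - d(s,m_{k-1}(s))\|^2\,ds.
\end{equation*}
Iterating this inequality gives the standard super-exponential bound $\mathbb{E}\|\Delta_t^{(k)}\|^2 \le (CT)^k/k!$, so $(X^{(k)})$ is Cauchy in $L^2(\Omega;C([0,T];\R^d))$ and its limit solves \eqref{eq:sde-mod} on $[0,T]$. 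Extending to $\R_{\ge 0}$ by concatenation, and deducing uniqueness by the same estimate applied to any two solutions, then completes the proof.

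The main obstacle I anticipate is the closure of this recursion: under mere measurability of $d$, the second integral is not controlled by the first, and the naive Picard contraction fails. My expectation is that some form of continuity of $d(t,\cdot)$ in its second argument is tacitly exploited (this is automatic in the intended funnel-control application of Section~\ref{sec:feasibility}); alternatively, one could replace the contraction step by a Schauder-type fixed-point argument applied to $m(\cdot)\mapsto \mathbb{E}[X_\cdot^{u_m}]$ with $u_m(t):=-d(t,m(t))$ and $X^{u_m}$ provided by Lemma~\ref{Lem:SDE-sln}, for which the uniform $L^2$ bounds and the equicontinuity in $t$ required by Arzel\`a-Ascoli again both follow from the essential boundedness of $d$.
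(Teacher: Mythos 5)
Your Picard iteration is exactly the route the paper has in mind: for this lemma the paper gives no argument beyond the remark that the proof is a ``straightforward modification'' of Theorem~5.2.1 in \cite{Okse03} ``using the boundedness of~$d$'', which is precisely the scheme you set up, and your a priori moment bounds, the cancellation of the Brownian part in the differences, and the Jensen step $\|m_k(s)-m_{k-1}(s)\|^2\le \mathbb{E}\|\Delta_s^{(k-1)}\|^2$ are all correct. The obstacle you flag at the end is, however, a genuine gap~--- in the statement as given, not in your reasoning. Boundedness of~$d$ yields the uniform bounds but not the contraction, and no fixed-point device can rescue mere measurability in the second argument: taking expectations in the integral equation shows that any solution of \eqref{eq:sde-mod} has an absolutely continuous mean $m(t)=\mathbb{E}[X_t]$ satisfying $\dot m(t)=-\mathbb{E}[\nabla V(X_t)]-Am(t)-A\,d(t,m(t))$ for a.e.~$t$; in one space dimension with $V=0$, $A=1$, $Z=0$ and $d(t,z)=1$ for $z\ge 0$, $d(t,z)=-1$ for $z<0$ (measurable and bounded), this ODE admits no absolutely continuous solution with $m(0)=0$, so \eqref{eq:sde-mod} has no solution at all. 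Hence the lemma requires an additional hypothesis on $d(t,\cdot)$: Lipschitz continuity, uniformly in~$t$, is what closes your Gr\"onwall recursion and delivers both existence and uniqueness, while continuity alone (via your Schauder alternative) would give existence but not the uniqueness asserted.

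Reassuringly, this costs nothing where the lemma is actually used: the function~$d$ constructed in Step~1 of the proof of Theorem~\ref{Thm:main} is globally Lipschitz in~$z$ uniformly in~$t$ (inside the funnel the factor $\tanh\big(1/(\psi(t)-\|z-y_{\rm ref}(t)\|)\big)$ has uniformly bounded derivative because $s\mapsto s^2\,\mathrm{sech}^2(s)$ is bounded, and outside the funnel the map is a scaled radial retraction), so your completed Picard argument proves exactly the version of the lemma that the main theorem needs. Your guess that continuity of $d(t,\cdot)$ is tacitly exploited is therefore essentially right, with the refinement that the uniqueness part needs the Lipschitz form of it.
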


Next, we recapitulate how we may derive an expression for the derivative of $\mathbb{E}[\phi(X_t)]$,
where $\phi \in C^2(\mathbb{R}^d,\R)$, by utilizing the SDE~\eqref{eq:sde}. Set $Y_t := \phi(X_t)$ for $t\ge 0$. From the multidimensional It\^{o} formula (see e.g.~\cite[Thm.~4.2.1]{Okse03}) it follows that
\[
    {\rm d} Y_t = \nabla \phi(X_t)^\top {\rm d} X_t + \tfrac12 ({\rm d} X_t)^\top \nabla^2 \phi(X_t) {\rm d} X_t.
\]
Using the standard rules (see e.g.~\cite[Thms.~4.1.2~\&~4.2.1]{Okse03})
\[
     {\rm d}t \cdot  {\rm d}t =  {\rm d}t \cdot  ({\rm d} B_t)_i = ({\rm d} B_t)_i \cdot {\rm d}t = 0,\quad ({\rm d} B_t)_i \cdot ({\rm d} B_t)_j = \delta_{ij} {\rm d}t
\]
for $i,j=1,\ldots,d,$ we may derive that
\[
    {\rm d} Y_t = (\mathcal{L}^{u(t)} \phi)(X_t) {\rm d}t + \sqrt{2} \nabla \phi(X_t)^\top {\rm d} B_t,
\]
where, for some $v\in\R^d$, $\mathcal{L}^{v}$ is the second-order linear differential operator
\begin{equation}
\label{eq:def_generator}
(\mathcal{L}^{v} \phi)(x) = -\big(\nabla V(x) + A (x - v)\big)^\top \nabla \phi(x) + \Delta \phi(x),
\end{equation}
where $x\in\R^d$. Written in integral form we have
\[
    Y_t = Y_0 + \int_0^t (\mathcal{L}^{u(t)} \phi)(X_s) {\rm d}s + \sqrt{2} \int_0^t \nabla \phi(X_s)^\top {\rm d} B_s,
\]
and by~\cite[Thm.~3.2.1]{Okse03} we obtain the implication
\begin{equation}\label{eq:impl-phi}
\begin{aligned}
    &\mathbb{E}\left[ \int_0^t \left(\frac{\partial \phi}{\partial x_i}(X_s)\right)^2 {\rm d}s\right] < \infty\\
    &\implies \quad \mathbb{E}\left[ \int_0^t \frac{\partial \phi}{\partial x_i}(X_s) {\rm d} (B_s)_i\right] = 0
    \end{aligned}
\end{equation}
for $i=1,\ldots,d$. Then we have
\[
    \mathbb{E}[Y_t] = \mathbb{E}[Y_0] + \int_0^t \mathbb{E}[(\mathcal{L}^{u(s)} \phi)(X_s)] {\rm d}s
\]
and taking the derivative yields
\begin{equation}
\label{eq:time_derivative_obs}
\ddt  \mathbb{E}[\phi(X_t)] = \mathbb{E}[\mathcal{L}^{u(t)} \phi(X_t)].
\end{equation}
The condition on the left hand side of the implication~\eqref{eq:impl-phi} is satisfied for all $t\ge 0$ and all $i=1,\ldots,d$, if $\nabla \phi$ is globally Lipschitz continuous, which can be seen as follows: Since $\nabla \phi$ is in particular linearly bounded, i.e., $\|\nabla \phi(x)\| \le c (1 +\|x\|)$ for all $x\in\R^d$, it follows that
\[
    \left(\tfrac{\partial \phi}{\partial x_i}(x)\right)^2 \!\le \nabla \phi(x)^\top \nabla \phi(x) \le c^2 (1 + \|x\|)^2 \le 2 c^2 (1 +  \|x\|^2),
\]
where we have used $2a \le 1 + a^2$ for any $a\ge 0$. Therefore,
\begin{align*}
    \mathbb{E}\left[ \int_0^t \left(\frac{\partial \phi}{\partial x_i}(X_s)\right)^2 {\rm d}s\right] \!\le\! 2 c^2 \left(t \!+\! \mathbb{E}\left[ \int_0^t \|X_s\|^2 {\rm d}s\right] \right) \!<\! \infty,
\end{align*}
using that $\mathbb{E}\left[ \int_0^t \|X_s\|^2 {\rm d}s\right]<\infty$ by the fact that $(X_t)_{t\ge 0}$ is a solution of~\eqref{eq:sde}. The above observations are summarized in the following result.

\begin{lemma}\label{Lem:diff-eq-phi}
Let $(X_t)_{t\ge 0}$ be a solution of the SDE~\eqref{eq:sde} for some admissible initial condition and $u\in L^\infty(\R_{\ge 0},\R^d)$. Further let $\phi\in C^2(\R^d,\R)$ be such that 
$\nabla \phi$ is globally Lipschitz continuous. Then $\mathbb{E}[\phi(X_t)]_{t\ge 0}$ satisfies the differential equation~\eqref{eq:time_derivative_obs}.
\end{lemma}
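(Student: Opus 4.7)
The plan is to formalize the calculation that the preceding paragraphs already sketch, so the proof is essentially an assembly of those observations. First I would apply the multidimensional It\^{o} formula to $Y_t := \phi(X_t)$, substituting the SDE~\eqref{eq:sde} for ${\rm d}X_t$ in the expression $\nabla\phi(X_t)^\top {\rm d}X_t + \tfrac12 ({\rm d}X_t)^\top \nabla^2\phi(X_t)\,{\rm d}X_t$. Using the standard It\^{o} multiplication rules $({\rm d}B_t)_i({\rm d}B_t)_j = \delta_{ij}{\rm d}t$ and ${\rm d}t\cdot{\rm d}B_t = 0$ listed in the text, the diffusion contribution collapses to $\Delta\phi(X_t)\,{\rm d}t$, so that
\[
{\rm d}Y_t = (\mathcal{L}^{u(t)}\phi)(X_t)\,{\rm d}t + \sqrt{2}\,\nabla\phi(X_t)^\top {\rm d}B_t
\]
with the generator $\mathcal{L}^{u(t)}$ as defined in~\eqref{eq:def_generator}, and hence in integral form $Y_t = Y_0 + \int_0^t (\mathcal{L}^{u(s)}\phi)(X_s)\,{\rm d}s + \sqrt{2}\int_0^t \nabla\phi(X_s)^\top{\rm d}B_s$.

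The second step, and the only one that really uses a hypothesis beyond the It\^{o} calculus, is to justify that the expectation of the stochastic integral vanishes. This is exactly the implication~\eqref{eq:impl-phi}, whose left-hand side must be verified. Here I would invoke the Lipschitz assumption on $\nabla\phi$ to obtain a linear bound $\|\nabla\phi(x)\|\le c(1+\|x\|)$, and then use $2\|x\|\le 1+\|x\|^2$ to estimate each component $(\partial_i\phi)^2$ by $2c^2(1+\|x\|^2)$. Combined with the a priori bound $\mathbb{E}\!\left[\int_0^T\|X_s\|^2{\rm d}s\right]<\infty$ that is built into the definition of a solution of~\eqref{eq:sde}, this gives the required $L^2$ condition and shows, via~\cite[Thm.~3.2.1]{Okse03}, that each of the $d$ stochastic integrals has zero mean.

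Taking expectations on both sides of the integral equation then yields
\[
\mathbb{E}[\phi(X_t)] = \mathbb{E}[\phi(X_0)] + \int_0^t \mathbb{E}[(\mathcal{L}^{u(s)}\phi)(X_s)]\,{\rm d}s,
\]
and differentiating with respect to $t$ by the fundamental theorem of calculus produces the desired identity~\eqref{eq:time_derivative_obs}. The linear growth of $\nabla\phi$, together with the growth of $\nabla V$ and boundedness of $u$, also ensures that $s\mapsto \mathbb{E}[(\mathcal{L}^{u(s)}\phi)(X_s)]$ is locally integrable, so the derivative exists for a.e.\ $t\ge 0$.

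I expect the main obstacle — and essentially the only nontrivial point — to be the integrability bound needed for~\eqref{eq:impl-phi}. Once the Lipschitz hypothesis is converted into linear growth and meshed with the $L^2$-in-time property of the solution, the remainder is purely a matter of writing down It\^{o}'s formula and differentiating under the integral sign.
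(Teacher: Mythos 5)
Your proposal is correct and follows essentially the same route as the paper: It\^{o}'s formula for $Y_t=\phi(X_t)$, verification of the integrability condition in~\eqref{eq:impl-phi} via the linear growth of $\nabla\phi$ (from global Lipschitz continuity) together with the $L^2$-in-time bound built into the definition of a solution, then taking expectations and differentiating. The only addition is your closing remark on local integrability of $s\mapsto \mathbb{E}[(\mathcal{L}^{u(s)}\phi)(X_s)]$, which is a harmless (and reasonable) refinement of the same argument.
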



\section{Feasibility of funnel control for Langevin dynamics}
\label{sec:feasibility}

In this section, we propose a modified funnel controller in order to achieve the control objective formulated in Section~\ref{Ssec:ContrObj}. The funnel controller is typically model-free (cf.~\cite{BergIlch21}) and only requires the information about the relative degree of the considered system to state the appropriate control law. Roughly speaking, the relative degree is the number of derivatives of the output which must be taken to obtain an explicit dependence on the input. For a precise definition for nonlinear ODE systems we refer to~\cite{Isid95}, for systems with infinite-dimensional internal dynamics see~\cite{BergPuch20a}. However, for controlled stochastic differential equations a concept of relative degree is not available. Nevertheless, for the output in~\eqref{eq:output}, it is possible to derive a relationship between~$\dot y$ and~$u$ by using~\eqref{eq:time_derivative_obs}, which gives for $\phi(x) = x_i$, $i=1,\ldots,d$, that
\begin{align*}
\ddt{}{}y_i(t) &= \mathbb{E}[\mathcal{L}^{u(t)} (X_t)_i] = \mathbb{E}\left[-\big(\nabla V(X_t) \!+\! A (X_t \!-\! u(t))\big) e_i\right] \\
&= -\mathbb{E}\left[\pd{V}{x_i}(X_t) \right] - e_i^\top A y(t) + e_i^\top A u(t),
\end{align*}
where $e_i$ is the $i$-th unit vector in $\R^d$, and therefore
\begin{equation}\label{eq:y-dot}
    \dot y(t) = -\mathbb{E}[\nabla V(X_t)] - Ay(t) + A u(t).
\end{equation}
This suggests that the SDE~\eqref{eq:sde} with output~\eqref{eq:output} at least exhibits an input-output behavior similar to that of a relative degree one system. This justifies to investigate the application of a corresponding funnel controller, which we need to modify here as follows
\begin{equation}\label{eq:fun-con}
u(t)=-\alpha \tanh\left(\frac{1}{\psi(t)-\|e(t)\|}\right) e(t),\ \ e(t)= y(t) - y_{\rm ref}(t)
\end{equation}
where $y_{\rm ref}\in W^{1,\infty}(\R_{\ge 0};\R^d)$ is the reference signal and $\alpha>0$ and $\psi\in\Psi$ are controller design parameters. The intuition behind the controller design~\eqref{eq:fun-con} is that the term $1/(\psi(t)-\|e(t)\|)$ is large whenever $\|e(t)\|$ is close to the funnel boundary, inducing a large control action. From the properties of the dynamics~\eqref{eq:y-dot} (more precisely, a high-gain property, cf.~\cite{BergIlch21}) it then follows that a large control action leads to a decaying tracking error or, in other words, the funnel boundary is repulsive.

The changes compared to a standard funnel controller as e.g.\ in~\cite{BergIlch21} are necessary to guarantee feasibility. Furthermore, depending on the constants from assumptions~(A1)--(A3), the controller will only be feasible for certain reference signals and a certain range of design parameters. For these signals and parameters we seek to show that, whenever $\|e(0)\| = \|\mathbb{E}[Z] - y_{\rm ref}(0)\| < \psi(0)$ for an admissible initial condition $X_0 = Z$, then there exists a unique solution of~\eqref{eq:sde},~\eqref{eq:output} under the control~\eqref{eq:fun-con} such that the tracking error evolves uniformly within the funnel $\cF_\psi$, i.e., $\|e(t)\| < \psi(t)$ for all $t\ge 0$. By a solution of~\eqref{eq:sde},~\eqref{eq:output},~\eqref{eq:fun-con} we mean a solution of the time-varying nonlinear SDE
\begin{equation}
\label{eq:sde-closed-loop}
\begin{aligned}
{\rm d} X_t &= -\Big(\nabla V(X_t) +  A\Big(X_t \\
&\left.\left. + \alpha \tanh\left(\tfrac{1}{\psi(t)-\|\mathbb{E}[X_t] - y_{\rm ref}(t)\|}\right) (\mathbb{E}[X_t] - y_{\rm ref}(t))\right) \right)\,{\rm d}t\\
&\, + \sqrt{2}\,{\rm d}B_t.
\end{aligned}
\end{equation}
In the following we present the main result of this paper.

\begin{theorem}\label{Thm:main}
Consider an SDE~\eqref{eq:sde} which satisfies assumptions~(A1)--(A3) with constants $c_1,\ldots,c_4$. Let $y_{\rm ref}\in W^{1,\infty}(\R_{\ge 0};\R^d)$, $\alpha >0$ and $\psi\in\Psi$ be such that there exists $p\in (0,1)$ with
\begin{equation}\label{eq:cond-c1c3}
   c_3 a \alpha \|\psi\|_\infty =  p c_1,
\end{equation}
where $a := \|A\|$, and
\begin{equation}\label{eq:cond-c1234}
\begin{aligned}
     &\frac{1}{1-p} \left(c_4 + \frac{c_2 c_3}{c_1}\right) + \frac{ap}{1-p} \|\psi\|_\infty  + \frac{a}{1-p} \|y_{\rm ref}\|_\infty \\
     & + \|\dot y_{\rm ref}\|_\infty + \|\dot \psi\|_\infty < \frac{pqc_1}{2c_3},
\end{aligned}
\end{equation}
where $q:= \frac{\inf_{t\ge 0} \psi(t)}{\sup_{t\ge 0} \psi(t)}$. Furthermore, let $X_0 = Z$ be an admissible initial condition which satisfies
\begin{equation}\label{eq:ini-cond}
    \|\mathbb{E}[Z] - y_{\rm ref}(0)\| < \psi(0)
\end{equation}
and
\begin{equation}\label{eq:kappa}
\begin{aligned}
    &\mathbb{E}\big[V(Z) + \tfrac12 Z^\top A Z\big] \\
    &\le \frac{c_2 + c_4 a \alpha \|\psi\|_\infty  + a^2 \alpha \|\psi\|_\infty (\|\psi\|_\infty  + \|y_{\rm ref}\|_\infty)}{c_1 - c_3 a \alpha \|\psi\|_\infty} =: \kappa.
\end{aligned}
\end{equation}
Then the SDE~\eqref{eq:sde} with output~\eqref{eq:output} and under the control~\eqref{eq:fun-con} (i.e., the SDE~\eqref{eq:sde-closed-loop}) has a unique solution $(X_t)_{t\ge 0}$ which satisfies
\begin{equation}\label{eq:error-funnel}
    \exists\, \eps>0\ \forall\, t\ge 0:\ \|\mathbb{E}[X_t] - y_{\rm ref}(t)\| < \psi(t) - \eps.
\end{equation}
\end{theorem}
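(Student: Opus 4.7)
The plan is to first reduce the theorem to a deterministic funnel-staying question by means of an a priori energy bound on the process, and then to close the argument by analyzing the closed-loop error dynamics~\eqref{eq:y-dot}. Since the feedback in~\eqref{eq:fun-con} is only defined while $\|e(t)\| < \psi(t)$, I would start by truncating the expression $\tanh(1/(\psi-\|e\|))$: replace its argument by $1/\max\{\psi(t) - \|e\|,\eta\}$ for a small $\eta>0$. The resulting mapping $(t,y) \mapsto \alpha\tanh(\cdot)(y - y_{\rm ref}(t))$ is then measurable and essentially bounded on $\R_{\ge 0}\times\R^d$, so Lemma~\ref{Lem:SDE-mod-sln} yields a unique global solution of the corresponding modified SDE~\eqref{eq:sde-mod}. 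It then suffices to show that this solution satisfies $\|e(t)\| \le \psi(t)-\eta$ for all $t\ge 0$ (with $\eta$ chosen sufficiently small), since then the truncation is inactive and the process solves~\eqref{eq:sde-closed-loop} together with~\eqref{eq:error-funnel}.

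For the a priori bound I would apply Lemma~\ref{Lem:diff-eq-phi} to $\phi(x) := V(x) + \tfrac12 x^\top A x$, whose gradient $\nabla V + Ax$ is globally Lipschitz by~(A1). Formula~\eqref{eq:def_generator} gives
\[
(\mathcal{L}^{u(t)}\phi)(x) = \Delta V(x) + \tr A - \|\nabla V(x) + Ax\|^2 + u(t)^\top A\big(\nabla V(x) + Ax\big),
\]
and applying~(A2) to the first three terms and~(A3) together with $\|u(t)\|\le\alpha\|\psi\|_\infty$ and $\|y(t)\|\le\|\psi\|_\infty+\|y_{\rm ref}\|_\infty$ to the last leads, for $W(t) := \mathbb{E}[\phi(X_t)]$, to a scalar inequality of the form
\[
\ddt W(t) \le -(c_1 - c_3 a\alpha\|\psi\|_\infty)\,W(t) + c_2 + c_4 a\alpha\|\psi\|_\infty + a^2\alpha\|\psi\|_\infty\big(\|\psi\|_\infty + \|y_{\rm ref}\|_\infty\big).
\]
Condition~\eqref{eq:cond-c1c3} makes the linear coefficient equal to $-(1-p)c_1 < 0$, and comparison with the stationary solution together with $W(0) \le \kappa$ from~\eqref{eq:kappa} gives $W(t) \le \kappa$ for all $t$; assumption~(A3) then yields the uniform bound $\|\mathbb{E}[\nabla V(X_t)]\| \le c_3\kappa + c_4$.

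To close the argument, suppose the exit time $\tau := \inf\{t \ge 0 : \|e(t)\| = \psi(t) - \eta\}$ is finite. By~\eqref{eq:ini-cond} and continuity one may choose $t_1 < \tau$ with $\|e(t_1)\| = \lambda/2$ and $\|e(t)\| \ge \lambda/2$ on $[t_1,\tau]$, where $\lambda := q\|\psi\|_\infty = \inf_{t\ge 0}\psi(t)$. Differentiating $\|e\|$ along~\eqref{eq:y-dot}, using the decomposition $-e^\top A y = -e^\top A e - e^\top A y_{\rm ref}$ and $e^\top A u = -\alpha\tanh(\cdot)\,e^\top A e$, yields
\[
\ddt\|e(t)\| \le \|\mathbb{E}[\nabla V(X_t)]\| + a\|y_{\rm ref}\|_\infty + \|\dot y_{\rm ref}\|_\infty - \big(1 + \alpha\tanh\big(\tfrac{1}{\psi(t)-\|e(t)\|}\big)\big)\tfrac{e(t)^\top A e(t)}{\|e(t)\|}.
\]
Substituting $\|\mathbb{E}[\nabla V(X_t)]\|\le c_3\kappa+c_4$ and expanding $\kappa$ via~\eqref{eq:cond-c1c3} collapses the outward-force terms together with $\|\dot\psi\|_\infty$ onto precisely the LHS of~\eqref{eq:cond-c1234}, while on $\{\|e\|\ge\lambda/2\}$ the inward pull is bounded below by the RHS $pqc_1/(2c_3) = a\alpha\lambda/2$ once $\eta$ is small enough that $\tanh(1/(\psi-\|e\|))$ is near saturation. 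Condition~\eqref{eq:cond-c1234} then forces $\ddt\|e(t)\| < \dot\psi(t)$ on $[t_1,\tau]$, so $\psi - \|e\|$ is strictly increasing near $\tau$---a contradiction. The uniform $\varepsilon > 0$ in~\eqref{eq:error-funnel} follows from the same inequality on $[0,\infty)$. The main obstacle will be the precise algebraic matching between $\kappa$, whose explicit form in~\eqref{eq:kappa} depends on $\alpha$, $\|\psi\|_\infty$ and $\|y_{\rm ref}\|_\infty$, and the LHS of~\eqref{eq:cond-c1234}, and in carefully choosing $\eta$ so that the strict inequality is preserved up to the boundary.
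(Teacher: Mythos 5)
Your plan follows essentially the same route as the paper: a bounded modification of the feedback so that Lemma~\ref{Lem:SDE-mod-sln} yields global existence, the energy observable $z(t)=\mathbb{E}[V(X_t)+\tfrac12 X_t^\top AX_t]$ estimated via (A2)--(A3) and a comparison argument giving $z(t)\le\kappa$, and a differential inequality for $\|e(t)\|$ closed by contradiction; also the algebraic matching you defer does work out exactly (with $M:=pc_1/c_3=a\alpha\|\psi\|_\infty$ one finds that $c_3\kappa+c_4+a\|y_{\rm ref}\|_\infty$ equals the first three terms on the left of \eqref{eq:cond-c1234}, and $pqc_1/(2c_3)=a\alpha\lambda/2$). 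However, two steps fail as written. First, your truncation caps only the argument of $\tanh$, not the factor $y-y_{\rm ref}(t)$: for $\|y-y_{\rm ref}(t)\|$ large the modified drift behaves like $\alpha\tanh(1/\eta)\,\|y-y_{\rm ref}(t)\|$, so it is \emph{not} essentially bounded on $\R_{\ge0}\times\R^d$ and Lemma~\ref{Lem:SDE-mod-sln} does not apply as claimed. The paper instead saturates the whole feedback outside the funnel, $d(t,z)=\alpha\psi(t)\frac{z-y_{\rm ref}(t)}{\|z-y_{\rm ref}(t)\|}$ for $\|z-y_{\rm ref}(t)\|\ge\psi(t)$, which gives $\|d\|\le\alpha\|\psi\|_\infty$ globally and, importantly, the bound $\|u(t)\|\le\alpha\psi(t)$ for \emph{all} $t$. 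With your construction, the bounds $\|u(t)\|\le\alpha\|\psi\|_\infty$ and $\|y(t)\|\le\|\psi\|_\infty+\|y_{\rm ref}\|_\infty$ used in your energy estimate are only available while the error is still inside the funnel, so the conclusion $W(t)\le\kappa$ must be localized to $[0,T)$ with $T$ the first exit time (as the paper does via its stopping time), not asserted for all $t\ge0$.

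Second, the contradiction argument is anchored on the wrong set. On $[t_1,\tau]$ with $\|e(t_1)\|=\lambda/2$ the distance $\psi(t)-\|e(t)\|$ can be as large as $\|\psi\|_\infty$, so the gain factor $\tanh\big(1/(\psi(t)-\|e(t)\|)\big)$ can be as small as $\tanh(1/\|\psi\|_\infty)$; making $\eta$ small does not help, because $\eta$ controls that factor only at the exit time itself. Hence your claim that the inward pull exceeds $a\alpha\lambda/2$ on $\{\|e\|\ge\lambda/2\}$ is unjustified, and $\ddt\|e(t)\|<\dot\psi(t)$ cannot be established on all of $[t_1,\tau]$. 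The repair is exactly the paper's Step~2b/2c: use the strict inequality \eqref{eq:cond-c1234} to fix $\eps>0$ with $\eps<\min\{\psi(0)-\|e(0)\|,\lambda/2\}$ and $c_3\kappa+c_4+a\|y_{\rm ref}\|_\infty+\|\dot y_{\rm ref}\|_\infty-a\alpha\tanh(1/\eps)\lambda/2\le-\|\dot\psi\|_\infty$, and then run the contradiction on the boundary layer: if $\|e(t_1)\|>\psi(t_1)-\eps$ for some $t_1$, set $t_0:=\max\{t\in[0,t_1]\,:\,\|e(t)\|=\psi(t)-\eps\}$ and argue on $[t_0,t_1]$, where simultaneously $\|e(t)\|\ge\psi(t)-\eps>\lambda/2$ and $\tanh\big(1/(\psi(t)-\|e(t)\|)\big)\ge\tanh(1/\eps)$; there one gets $\tfrac12\ddt\|e(t)\|^2\le-\|\dot\psi\|_\infty\|e(t)\|$, and integration produces the contradiction $\eps=\psi(t_0)-\|e(t_0)\|\le\psi(t_1)-\|e(t_1)\|<\eps$. (Equivalently, your version can be saved by restricting to a left neighbourhood of $\tau$ on which $\psi-\|e\|\le2\eta$.) With these two repairs your proposal coincides with the paper's proof.
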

\begin{proof} \emph{Step 1}: We show the existence of a unique solution of~\eqref{eq:sde-closed-loop}. To this end, define
\begin{align*}
    &d:\R_{\ge 0}\times\R^d\to\R^d,\\
     &(t,z) \mapsto \begin{cases}  \alpha \tanh\left(\frac{1}{\psi(t)-\|z - y_{\rm ref}(t)\|}\right) (z - y_{\rm ref}(t)), \\
      \qquad\qquad\qquad\qquad\qquad\mbox{if}\ \|z - y_{\rm ref}(t)\|< \psi(t),\\
    \alpha \psi(t) \frac{z - y_{\rm ref}(t)}{\|z - y_{\rm ref}(t)\|},\\
     \qquad\qquad\qquad\qquad\qquad\mbox{if}\ \|z - y_{\rm ref}(t)\|\ge \psi(t).\end{cases}
\end{align*}
It is easy to see that $d\in L^\infty(\R_{\ge 0}\times\R^d,\R^d)$ since $\psi$ is bounded. Therefore, by Lemma~\ref{Lem:SDE-mod-sln} there exists a unique solution $(X_t)_{t\ge 0}$ of the SDE~\eqref{eq:sde-mod} with initial condition $X_0 = Z$. Define
\[
    u(t) := -d(t,\mathbb{E}[X_t]),\quad t\ge 0,
\]
then, by construction of~$d$, it is clear that, if~\eqref{eq:error-funnel} holds, then $(X_t)_{t\ge 0}$ is also the unique solution of~\eqref{eq:sde-closed-loop} and~$u$ coincides with the control signal in~\eqref{eq:fun-con}.

\emph{Step 2}: We show~\eqref{eq:error-funnel}. To this end, we define
\[
    \tilde{V}(x) := V(x) + \frac{1}{2}x^\top A x
\]
and, which is the key idea of the proof, consider the observable
\[
    z(t) := \mathbb{E}[\tilde  V(X_t)], \quad t\ge 0.
\]

\emph{Step 2a}: We derive an estimate for the derivative of $z$. Since $V$ is non-negative by~(A1) we have $z(t) \ge 0$ for all $t\ge 0$, and by Lemma~\ref{Lem:diff-eq-phi} with $\phi = \tilde V$ we obtain (since $\nabla \tilde V$ is globally Lipschitz continuous by~(A1))
\begin{align*}
  \dot z(t) &= \mathbb{E}\left[-\big(\nabla V(X_t) \!+\! A (X_t \!-\! u(t))\big)^\top \nabla \tilde{V}(X_t) \!+\! \Delta \tilde{V}(X_t) \right]\\
  &= \mathbb{E}[\mathcal{L}^0 \tilde  V(X_t)] + \mathbb{E}[\nabla  V(X_t)]^\top A u(t) + y(t)^\top A^2 u(t),
\end{align*}
where $\mathcal{L}^0$ is the operator~\eqref{eq:def_generator} for $v = 0$. Using condition~(A2) we find that
\begin{align*}
     \mathbb{E}[\mathcal{L}^0 \tilde  V(X_t)] &= \mathbb{E}[\Delta V(X_t) + \tr A - \|\nabla V(X_t) + A X_t\|^2] \\
    &\le \mathbb{E}[-c_1 \tilde V(X_t) + c_2] = - c_1 z(t) + c_2.
\end{align*}
Under condition~(A3) we obtain
\[
    \|\mathbb{E}[\nabla V(X_t)]\| \le \mathbb{E}[c_3 \tilde V(X_t) + c_4] \le c_3 z(t) + c_4,
\]
thus
\[
    \dot z(t) \le - c_1 z(t) + c_2 + (c_3 z(t) + c_4)  \|A u(t)\| + y(t)^\top A^2 u(t).
\]
Now, let
\[
    T:= \inf \setdef{t\ge 0}{ \|y(t) - y_{\rm ref}(t)\| = \psi(t)}
\]
and by~\eqref{eq:ini-cond} we have that $T\in (0,\infty]$ and
\[
    \|y(t)\| \le \|y(t) - y_{\rm ref}(t)\| + \|y_{\rm ref}(t)\| \le \psi(t) + \|y_{\rm ref}(t)\|
\]
for all $t\in [0, T)$. By definition of~$u(t)$ and~$d$ we find that
\[
    \|u(t)\| \le \alpha \psi(t),\quad t\ge 0,
\]
and with $a = \|A\|$ and $\|\psi\|_\infty = \sup_{t\ge 0} \psi(t)$ we obtain
\begin{align*}
    \dot z(t) &\le - c_1 z(t) + c_2 + a \alpha \|\psi\|_\infty (c_3 z(t) + c_4)  \\
    &\quad + a^2 \alpha \|\psi\|_\infty (\|\psi\|_\infty + \|y_{\rm ref}\|_\infty)
\end{align*}
for all $t\in [0,T)$. Then it follows from the comparison principle and~\eqref{eq:kappa} that
\[
   \forall\, t \in [0,T):\ z(t) \le \kappa.
\]

\emph{Step 2b}: We define a suitable $\eps>0$ for~\eqref{eq:error-funnel}. By assumption~\eqref{eq:cond-c1234} there exists $p\in(0,1)$ such that
\begin{align*}
   &\left(c_3 \frac{c_2 + c_4 M  + a M  (\|\psi\|_\infty  + \|y_{\rm ref}\|_\infty)}{c_1 - c_3 M} + c_4\right)  + a \|y_{\rm ref}\|_\infty
    \\
    &+ \|\dot y_{\rm ref}\|_\infty + \|\dot \psi\|_\infty < \frac{q}{2} M
\end{align*}
is satisfied for $M := p \frac{c_1}{c_3}$. Since by~\eqref{eq:cond-c1c3} we have that additionally $M = a \alpha \|\psi\|_\infty$ it follows that
\begin{align*}
    &\left(c_3 \frac{c_2 \!+\! c_4 a \alpha \|\psi\|_\infty  \!+\! a^2 \alpha \|\psi\|_\infty (\|\psi\|_\infty  \!+\! \|y_{\rm ref}\|_\infty)}{c_1 \!-\! c_3 a \alpha \|\psi\|_\infty} \!+\! c_4\right)  \\
    &\quad + a \|y_{\rm ref}\|_\infty
    + \|\dot y_{\rm ref}\|_\infty + \|\dot \psi\|_\infty < \alpha a \lambda/2,
\end{align*}
where $\lambda:= \inf_{t\ge 0} \psi(t)$. Therefore, with $\kappa$ from~\eqref{eq:kappa} it follows that
\[
     c_3 \kappa + c_4 + a \|y_{\rm ref}\|_\infty
    + \|\dot y_{\rm ref}\|_\infty + \|\dot \psi\|_\infty < \alpha a \lambda/2,
\]
and hence there exists $\eps >0$ such that $\eps < \min\{\psi(0)-\|e(0)\|, \lambda/2\}$ and
\[
   c_3 \kappa + c_4 + a \|y_{\rm ref}\|_\infty
    + \|\dot y_{\rm ref}\|_\infty - a \alpha \tanh(1/\eps) \lambda/2\le -\|\dot \psi\|_\infty .
\]

\emph{Step 2c}: We show that  the tracking error $e(t) = y(t) - y_{\rm ref}(t)$ satisfies $\|e(t)\|\le \psi(t) - \eps$ for all $t \in [0,T)$. Seeking a contradiction, and invoking $\|e(0)\| < \psi(0) - \eps$, assume that $\|e(t_1)\|> \psi(t_1) - \eps$ for some $t_1\in [0,T)$ and define
\[
    t_0 := \max \setdef{t\in [0,t_1]}{\|e(t)\| = \psi(t) - \eps}.
\]
Then we have $\psi(t)-\|e(t)\| \le \eps$ and since $\eps < \lambda/2$, we have $\|e(t)\| \ge \psi(t) - \eps > \lambda/2$ for $t\in [t_0, t_1]$. Therefore, $\|u(t)\|  \ge \alpha \tanh(1/\eps) \lambda/2$ for all $t\in [t_0, t_1]$ and by~\eqref{eq:y-dot} we obtain
\begin{align*}
    &\tfrac12 \ddt \|e(t)\|^2 = -e(t)^\top \mathbb{E}[\nabla V(X_t)]  - e(t)^\top Ay(t)
    \\
    &\quad + e(t)^\top A u(t) - e(t)^\top \dot y_{\rm ref}(t)\\
&\le \|e(t)\|\big( c_3 z(t) + c_4  - a \|e(t)\| + a \|y_{\rm ref}(t)\|
- a \|u(t)\| \\
&\quad + \|\dot y_{\rm ref}(t)\|\big)\\
&\le \|e(t)\|\big( c_3 \kappa + c_4 + a \|y_{\rm ref}\|_\infty
    - a \alpha \tanh(1/\eps) \lambda/2 \\
    &\quad + \|\dot y_{\rm ref}\|_\infty\big)\\
    &\le - L \|e(t)\|,
\end{align*}
where $L := \|\dot \psi\|_\infty$ and by the mean value theorem we have
\[
    |\psi(t_1)-\psi(t_0)| \le L |t_1 - t_0|.
\]
Upon integration we obtain
\begin{align*}
  &\|e(t_1)\| - \|e(t_0)\| = \int_{t_0}^{t_1} \tfrac12 \|e(t)\|^{-1} \ddt  \|e(t)\|^2 {\rm d}t\\
  &\le -L (t_1 - t_0) \le - |\psi(t_1) - \psi(t_0)| \le \psi(t_1) - \psi(t_0),
\end{align*}
thus arriving at the contradiction
\[
    \eps = \psi(t_0) - \|e(t_0)\| \le \psi(t_1) - \|e(t_1)\| < \eps.
\]
In particular, this implies $T=\infty$ and  we have further shown~\eqref{eq:error-funnel} and this concludes the proof.
\end{proof}

\begin{remark}\label{rem:simpl_setting}
Some comments on the conditions in Theorem~\ref{Thm:main} are warranted. First observe that, due to assumptions~(A2) and~(A3), the constants $c_1,\ldots,c_4$ depend on~$a = \|A\|$ and will increase/decrease when $a$ changes, see Section~\ref{sec:examples} for a specific example.

In order to check the conditions~\eqref{eq:cond-c1c3} and~\eqref{eq:cond-c1234}, suitable values for the design parameters must be found. To this end, note that the controller weighting matrix $A$ is also a design parameter, which may be chosen as desired in order to satisfy the assumptions. A typical situation is that the right-hand side in~\eqref{eq:cond-c1234} grows faster with increasing~$a$ than the left-hand side, see e.g. Section~\ref{sec:examples}. Then arbitrary $y_{\rm ref}$, $\psi$ and $p$ may be fixed and afterwards $a$ can be chosen sufficiently large so that~\eqref{eq:cond-c1234} is satisfied. After that, $\alpha$ can be defined so that~\eqref{eq:cond-c1c3} is satisfied~-- note that~\eqref{eq:cond-c1234} is independent of~$\alpha$.

The conditions in Theorem~\ref{Thm:main} simplify if we choose a constant funnel boundary. For $\psi = {\rm const}$, we have that $\dot \psi=0$ and $q=1$ in~\eqref{eq:cond-c1234}. Choosing $p=\tfrac12$, condition~\eqref{eq:cond-c1234} turns into
\begin{equation}\label{eq:cond-c1234-simpl}
     2 \left(c_4 + \frac{c_2 c_3}{c_1}\right) + a \big(\psi + 2 \|y_{\rm ref}\|_\infty\big) + \|\dot y_{\rm ref}\|_\infty < \frac{c_1}{4c_3},
\end{equation}
which, for fixed $\psi$, only involves the constants $a,c_1,\ldots, c_4$ and the reference signal $y_{\rm ref}$.
\end{remark}

\begin{remark}
The assumptions of Theorem~\ref{Thm:main} are structural in the following sense: For fixed controller design parameters $\alpha>0$, $\psi\in\Psi$ and symmetric positive definite $A\in\R^{d\times d}$, and reference signal $y_{\rm ref}\in W^{1,\infty}(\R_{\ge 0},\R^d)$, there is a whole class of systems which satisfy the assumptions of Theorem~\ref{Thm:main}, if they are satisfied for at least one potential $V$. More precisely, for $a = \|A\|$ the set
\[
    \Sigma_{a,\alpha,\psi} \!=\! \setdef{\!V\in C^2(\R^d,\R)}{ \!\!\!\begin{array}{l} V\ \text{satisfies (A1)--(A3) and} \\ \text{\eqref{eq:cond-c1c3},~\eqref{eq:cond-c1234} for some $p\in(0,1)$}\end{array}\!\!\!}
\]
is either empty or contains an open ball in $C^2(\R^d,\R)$, as all conditions depend continuously on $V$ and its first two derivatives.

We stress that indeed $\Sigma_{a,\alpha,\psi}$ is not always empty. It contains the potential $V=0$ under the condition
\begin{equation}\label{eq:cond-V=0}
\begin{aligned}
    &\frac{2pd}{(1-p) \alpha \|\psi\|_\infty } + \frac{ap}{1-p} \|\psi\|_\infty  + \frac{a}{1-p} \|y_{\rm ref}\|_\infty \\
     &\quad+ \|\dot y_{\rm ref}\|_\infty + \|\dot \psi\|_\infty < \frac{q}{4} a \alpha \|\psi\|_\infty
\end{aligned}
\end{equation}
on the controller design parameters and $y_{\rm ref}$, where $q= \frac{\inf_{t\ge 0} \psi(t)}{\sup_{t\ge 0} \psi(t)}$. This condition results from the fact that, by Example~\ref{Ex:quad-pot}, (A1)--(A3) are satisfied for $c_1 = a$, $c_2=da$, $c_4=0$ and arbitrary $c_3\ge 0$ in this case. Furthermore, for $p\in(0,1)$ we find that~\eqref{eq:cond-c1c3} always holds for $c_3 = 2p/\alpha \|\psi\|_\infty$, so we fix $c_3$ to this value. Then inserting this into~\eqref{eq:cond-c1234} leads to the condition~\eqref{eq:cond-V=0}.
\end{remark}

\section{Numerical Example}
\label{sec:examples}

In this section, we show that funnel control can be used for tracking control of a stochastic system with a more complex energy function $V$ than previously discussed. We also illustrate the fact that funnel control is essentially model-free, i.e., for a fixed tuple of controller design parameters $(\psi, A, \alpha)$ it is feasible for a whole class of systems that satisfy the assumptions of Theorem~\ref{Thm:main}.

We consider diffusion in the two-dimensional double-well potential
\[ V_{\rm dw}(x, y) = C_x (x^2 - 1)^2 + C_y y^2, \]
where the second parameter is set to $C_y = 3$, while we will establish a corresponding range of admissible values for $C_x$ further below. The double-well is a very simple, but widely used model system for molecular applications that involve metastability. A contour plot of the potential for $C_x = 1, C_y = 3$ is shown in Figure~\ref{fig:dw_potential}~A. In the uncontrolled setting ($u \equiv 0$), the dynamics spend long times oscillating around one of the two potential minima, while rarely crossing the barrier at $x = 0$. Therefore, we choose a reference signal which ensures that the controlled system alternates frequently between the two minima, following a figure-eight shaped trajectory (also shown in Figure~\ref{fig:dw_potential}~A), given by:
\begin{align*}
y_{\rm ref}(t) &= \begin{pmatrix} \cos(\frac{2\pi}{\rho}t) & \sin(\frac{4\pi}{\rho}t)  \end{pmatrix}^\top,\\
 \dot{y}_{\rm ref}(t) &= \frac{4\pi}{\rho}\begin{pmatrix} -\frac{1}{2}\sin(\frac{2\pi}{\rho}t) & \cos(\frac{4\pi}{\rho}t)  \end{pmatrix}^\top.
\end{align*}
The period $\rho$ of the reference signal is set to $0.5$, while the simulation horizon is $T = 1.0$, thus enforcing two complete oscillations along the reference trajectory. We verify numerically that $\|y_{\rm ref}\|_\infty \approx 1.25$ and $\|\dot{y}_{\rm ref}\|_\infty \approx 28.1$.

In virtue of Remark~\ref{rem:simpl_setting}, we consider the simple setting of a constant funnel boundary  $\psi = 1.0$, by which $q = 1$ in~\eqref{eq:cond-c1234}. Furthermore, we choose the controller weight matrix $A = a I_2$ with control strength $a > 0$ and fix $p=\tfrac12$, by which we may consider the simplified version~\eqref{eq:cond-c1234-simpl} of the aforementioned condition.

To ensure that $\nabla V$ is actually globally Lipschitz continuous, in accordance with our theoretical results, we will fix some $R > 1$, and modify the potential to be quadratic outside a ball of radius $R$:
\[
    V:\R^2\to\R,\ (x,y)\mapsto \begin{cases} V_{\rm dw}(x, y), & |x| \le R,\\
   d_1 x^2 \!-\! d_2 x \!+\! d_3 \!+\! C_y y^2, & x > R,\\
    d_1 x^2 \!+\! d_2 x \!+\! d_3 \!+\! C_y y^2, & x < -R. \end{cases}
\]
The constants $d_1,d_2,d_3\in\R$ are uniquely determined by the condition that $V\in C^2(\R^2,\R)$ and can be calculated to be
\[
    d_1 = 2 C_x (3R^2-1),\quad d_2 = 8C_x R^3,\quad d_3 = C_x (3R^4+1).
\]
We will now show that all assumptions of Theorem~\ref{Thm:main} are satisfied for this example.

\begin{figure}
\includegraphics[width=.43\textwidth]{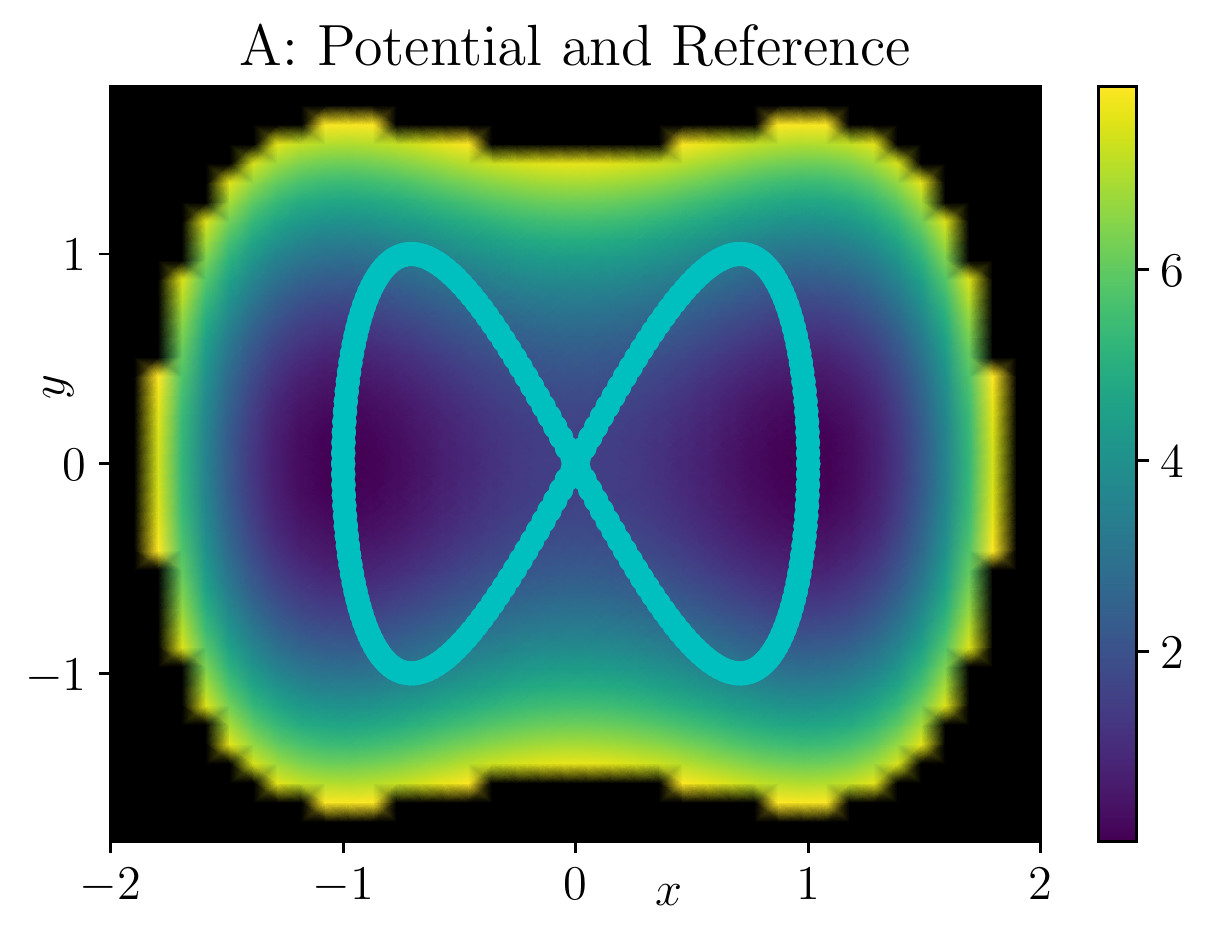}
\includegraphics[width=.45\textwidth]{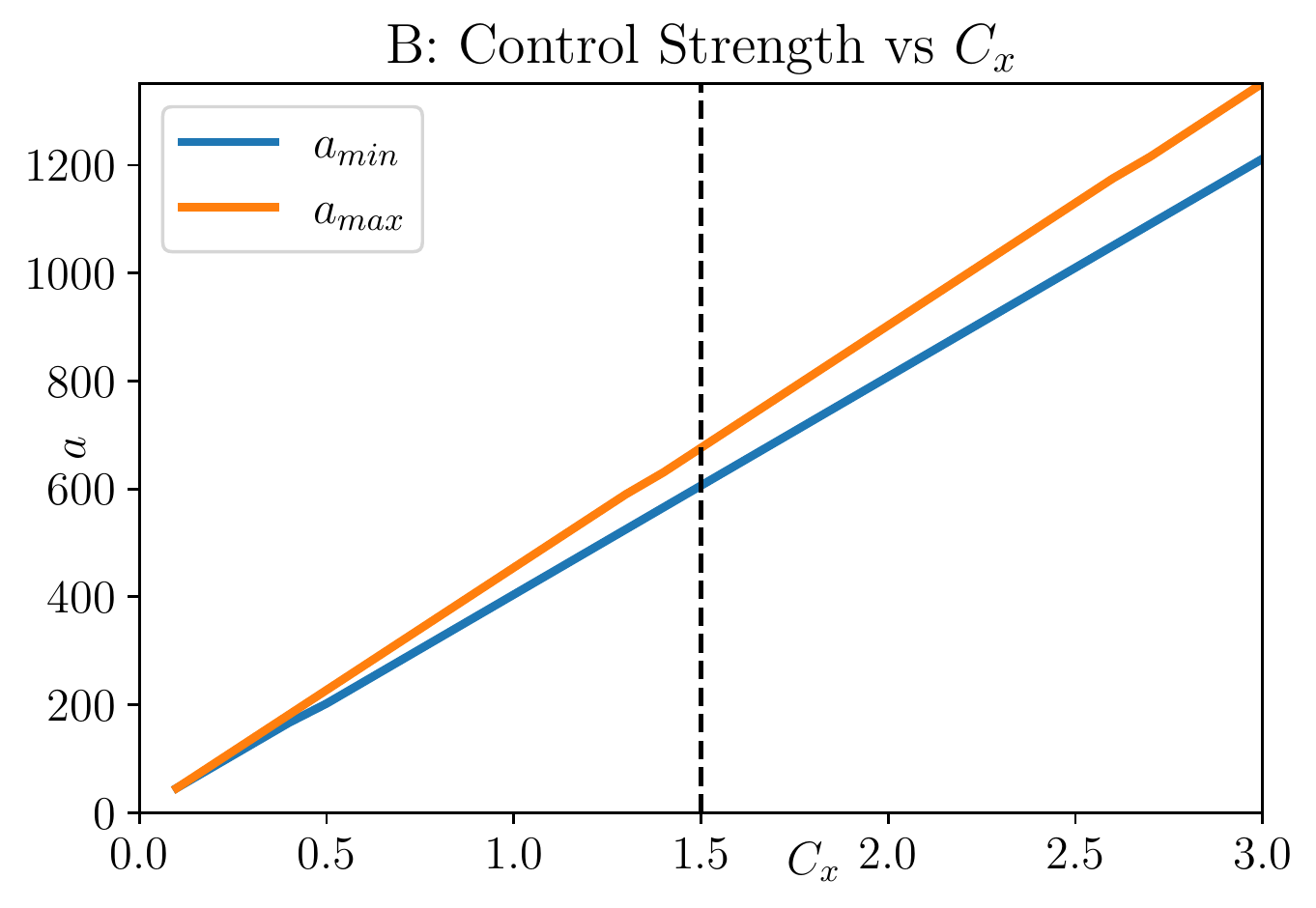}
\caption{A: Contour plot of the double-well potential for $C_x = 1.5$ and $C_y = 3.0$, with the reference signal $y_{\rm ref}$ shown in light blue. B: Lower and upper bounds for the control strength~$a$ as a function of $C_x$. The value $C_x = 1.5$ used in numerical simulations is indicated by the dashed line. \label{fig:dw_potential}}
\end{figure}

\subsection{Theoretical Performance Guarantees}
In the following, we calculate the constants $c_1,\ldots,c_4$ from assumptions~(A2) and~(A3) and determine the minimal $a$ such that~\eqref{eq:cond-c1234-simpl} holds. Finally, we determine $\alpha$ using~\eqref{eq:cond-c1c3}. By definition, $V$ satisfies assumption~(A1). For the remaining conditions, we require  the following derivatives:
\begin{align*}
(\nabla V)_1(x,y) &= \begin{cases} 4C_x x (x^2-1), & |x| \le R,\\
   2d_1 x - d_2 , & x > R,\\
   2 d_1 x + d_2, & x < -R, \end{cases}\\
   (\nabla V)_2(x,y) &= 2C_y y, \\
\Delta V(x,y) &= \begin{cases} 12 C_x x^2 - 4 C_x + 2 C_y, & |x| \le R,\\
  2 (d_1 + C_y) , & |x| > R, \end{cases} &
\end{align*}

\paragraph{Case 1}
We first consider the case of $(x,y)\in\R^2$ with $|x| \le R$. Concerning condition (A2), we verify that
\begin{align*}
  & \Delta V(x,y) + \tr A - \|\nabla V(x,y) + A\begin{smallpmatrix} x\\ y\end{smallpmatrix} \|^2 \\
  &= 12 C_x x^2 \!-\! 4 C_x \!+\! 2 C_y \!+\! 2a \!-\! \big(4 C_x (x^2\!-\!1) \!+\! a\big)^2 (x^2\!-\!1)\\
   &\quad -\big(4 C_x (x^2\!-\!1) \!+\! a\big)^2 \!-\! (2C_y \!+\! a)^2 y^2\\
  &= 12 C_x x^2 - 4 C_x + 2 C_y + 2a - 16 C_x^2 (x^2-1)^2 (x^2-1) \\
  &\quad - 8 a C_x (x^2-1)^2 - a^2 (x^2-1) - 16 C_x^2 (x^2-1)^2\\
  &\quad    - 8 a C_x (x^2-1) - a^2 - (2C_y + a)^2 y^2\\
  &\le 12 C_x x^2 - 4 C_x + 2 C_y + 2a + 8 a C_x - 8 a C_x (x^2-1)^2 \\
  &\quad - (a^2+ 8 a C_x) x^2 - (2C_y + a)^2 y^2 \\
   & = - 8 a C_x (x^2-1)^2  \!-\! (a^2 + 8 a C_x - 12 C_x) x^2 \!-\! (2C_y + a)^2 y^2 \\
   &\quad - 4 C_x + 2 C_y + 2a + 8 a C_x   \\
   &\le -c_1\big( C_x(x^2-1)^2 + \tfrac{a}{2} x^2 +  \tfrac{1}{2}(2C_y +a) y^2\big) + c_2 \\
   &= -c_1 \big(V(x,y) + \tfrac12 \begin{smallpmatrix} x\\ y\end{smallpmatrix}^\top A \begin{smallpmatrix} x\\ y\end{smallpmatrix}\big) + c_2
\end{align*}
will hold for
\begin{equation}
\label{eq:c1_c2_dw_leq_R}
\begin{aligned}
  c_1 &:= \min\left\{8 a, 2a + 16 C_x - 24 \tfrac{C_x}{a},  4C_y + 2a\right\},\\
  c_2 &:= 2 C_y + 2a + 8 a C_x - 4 C_x.
\end{aligned}
\end{equation}
Concerning condition (A3), we find that
\begin{align*}
  & \|\nabla V(x,y)\| = \sqrt{16 C_x^2 x^2 (x^2-1)^2 + 4 C_y^2 y^2} \\
  &\le 4 C_x \, |x|\, |x^2-1| + 2 C_y |y|\\
  &= 4 C_x \, |x|\, |x^2-1| + 2 C_y |y| - c_3 \big( C_x (x^2-1)^2 + C_y y^2 \\
  &\quad + \tfrac{a}{2} (x^2+y^2)\big) - c_4\\
  &\quad + c_3 \big( C_x (x^2-1)^2 + C_y y^2 + \tfrac{a}{2} (x^2+y^2)\big) + c_4\\
    &= -c_3 C_x \left( \big( |x^2\!-\!1| \!-\! \tfrac{2|x|}{c_3}\big)^2 +\big( \tfrac{a}{2 C_x} \!-\! \tfrac{4}{c_3^2}\big) x^2\right) \\
     &\quad - \tfrac{c_3}{2} (2 C_y \!+\!a) \left(|y| \!-\! \tfrac{2 C_y}{c_3(2C_y \!+\!a)}\right)^2 \!+\! \tfrac{2 C_y^2}{c_3(2C_y \!+\!a)} \!-\! c_4 \\
  &\quad  + c_3 \big( C_x (x^2-1)^2 + C_y y^2 + \tfrac{a}{2} (x^2+y^2)\big) + c_4\\
  &\le c_3 \big( C_x (x^2-1)^2 + C_y y^2 + \tfrac{a}{2} (x^2+y^2)\big) + c_4
\end{align*}
will be satisfied for the choice
\begin{align}
\label{eq:c3_c4_dw_leq_R}
  c_3 &:= \sqrt{\frac{8C_x}{a}}, &
  c_4 &:= \sqrt{\frac{a}{8 C_x}} \frac{2 C_y^2}{2C_y +a} .
\end{align}
For
\begin{equation}\label{eq:cond-Cx-Cy}
    C_y \le \min\left\{ \frac{3a}{2}, \frac{4a-6}{a} C_x\right\}
\end{equation}
we observe that $c_1$ simplifies to $c_1 = 4C_y + 2a$.

\paragraph{Case 2} We now turn to the case $x > R$ and show that for $a$ and $R$ sufficiently large the assumptions~(A2) and~(A3) are satisfied with the same constants $c_1,\ldots,c_4$. For~(A2) we observe that
\begin{align*}
  & \Delta V(x,y) + \tr A - \|\nabla V(x,y) + A\begin{smallpmatrix} x\\ y\end{smallpmatrix} \|^2 \\
  &= 2d_1 + 2C_y + 2a - ((2d_1+a)x-d_2)^2 - (2C_y + a)^2 y^2\\
  &= 2(d_1 + C_y + a) - d_2^2 - (2d_1+a)^2x^2 + 2 (2d_1+a) d_2 x \\
  &\quad- (2C_y + a)^2 y^2 \\
   &\le -c_1\big( \tfrac{1}{2}(2 d_1 +a)x^2 -d_2 x + d_3 +  \tfrac{1}{2}(2C_y +a) y^2\big) + c_2 \\
   &= -c_1 \big(V(x,y) + \tfrac12 \begin{smallpmatrix} x\\ y\end{smallpmatrix}^\top A \begin{smallpmatrix} x\\ y\end{smallpmatrix}\big) + c_2
\end{align*}
holds with $c_1 = 4C_y + 2a$, if
\begin{align*}
 & - (2d_1+a)^2x^2 + 2 (2d_1+a) d_2 x \\
 &\le -(2C_y +a)\big( (2 d_1 +a)x^2 - 2d_2 x\big)\\
 \text{and}\quad &2(d_1 + C_y + a) - d_2^2 \le  -(2C_y +a) d_3 + c_2.
\end{align*}
After inserting~$c_2$, the second condition is equivalent to
\begin{align}
  &(8a-4) C_x \ge  2d_1 - d_2^2 + (2C_y +a)d_3 \notag\\
  \iff\quad & 64 C_x R^6 - (2C_y+a)(3R^4+1) - 12 R^2 + 8 a \ge 0\notag\\
  \iff\quad & (3R^4-7)^{-1} \big(64 C_x R^6 \!-\! 6C_y R^4 \!-\! 12 R^2 \!-\! 2C_y\big) \ge a,  \label{eq:upper-bound-a-R-2}
\end{align}
which is valid for $R>1$ such that $3R^4 > 7$ (which we suppose henceforth) and defines an upper bound for $a$. Invoking $x>R$ the first condition simplifies to
\[
   \forall\, x>R:\quad 2 d_2 (d_1 - C_y)  \le (2 d_1 +a)x  (d_1 - C_y).
\]
We choose $R$ sufficiently large so that
\begin{equation}\label{eq:R-suff-1}
    d_1 = 2 C_x (3R^2-1) \ge C_y,
\end{equation}
then the above condition is satisfied, if
\[
    2 d_2  \le (2 d_1 +a) R,
\]
which is satisfied for $a$ sufficiently large, more precisely for
\begin{equation}\label{eq:lower-bound-a-R}
    a \ge \frac{2d_2}{R} - 2 d_1 = 16C_x R^2 - 4 C_x (3R^2-1) = 4 C_x (R^2+1).
\end{equation}
This lower bound is compatible with the upper bound~\eqref{eq:upper-bound-a-R-2}, if
\begin{equation}\label{eq:R-suff-4}
    52 C_x R^6 - 6(2C_x+C_y)R^4 + (28C_x-12) R^2 + 28C_x - 2 C_y \ge 0.
\end{equation}
For~(A3)  we first observe that
\[
    2 d_1 R - d_2 = 4 C_x R (R^2-1) \ge 0
\]
for $R> 1$. Then we obtain for $x>R$ that
\begin{align*}
   &\|\nabla V(x,y)\| = \sqrt{(2d_1x-d_2)^2 + 4 C_y^2 y^2}\\
   & \le 2 d_1 x - d_2 + 2 C_y |y|\\
    &= -\frac{c_3}{2} (2d_1 +a) \left(x \!-\! \frac{2d_1 + c_3 d_2}{c_3(2d_1+a)}\right)^2 + \frac{(2d_1 + c_3 d_2)^2}{2c_3(2d_1+a)} \!-\! d_2 \\
    &\quad - \tfrac{c_3}{2} (2 C_y +a) \left(|y| - \tfrac{2 C_y}{c_3(2C_y +a)}\right)^2+ \tfrac{2 C_y^2}{c_3(2C_y +a)} \\
  &\quad  + c_3 \big( d_1 x^2 - d_2 x + C_y y^2 + \tfrac{a}{2} (x^2+y^2)\big)\\
  &\le c_3 \big( d_1 x^2 - d_2 x + d_3 + C_y y^2 + \tfrac{a}{2} (x^2+y^2)\big) + c_4
\end{align*}
holds, if
\[
     \frac{(2d_1 + c_3 d_2)^2}{2c_3(2d_1+a)} - d_2 \le c_3 d_3.
\]
Invoking $\frac{1}{2d_1+a} \le \frac{R}{2d_2}$ this is true, if
\[
    \left(\frac{d_1^2}{c_3 d_2} + d_1 + \frac{c_3d_2}{4}\right) R - d_2 \le c_3 d_3.
\]
This leads to
\begin{align*}
    \frac{R d_1^2}{d_2}  &\le c_3^2 \left( d_3- \frac{Rd_2}{4}\right) + 2 c_3 C_x R (R^2+1) \\
    &= \frac{8C_x^2}{a} (R^4+1) + 2 \sqrt{\frac{8C_x}{a}} C_x R (R^2+1),
\end{align*}
hence
\begin{equation}\label{eq:upper-bound-a-R}
    a - 4 \sqrt{8C_x}   \frac{R^3 (R^2+1)}{(3R^2-1)^2} \sqrt{a} - 16C_x \frac{R^2  (R^4+1)}{(3R^2-1)^2} \le 0.
\end{equation}
The above condition defines a second upper bound for $a$ in terms of $R$. We need to ensure that this upper bound actually exceeds the lower bound given by~\eqref{eq:lower-bound-a-R}, so that feasible values of $a$ exist. We first observe that for $R$ large enough the argument of the minimum of the left hand side of~\eqref{eq:upper-bound-a-R} in~$a$ is less than the lower bound from~\eqref{eq:lower-bound-a-R}:
\begin{equation}\label{eq:R-suff-2}
\begin{aligned}
    &32 C_x   \frac{R^6 (R^2+1)^2}{(3R^2-1)^4} \le 4 C_x (R^2+1)\\
     \iff\quad &8 R^6 (R^2+1) \le(3R^2-1)^4.
\end{aligned}
\end{equation}
Then both bounds are compatible, if for $a=4 C_x (R^2+1)$ the left hand side of~\eqref{eq:upper-bound-a-R} is negative, which is the case if, and only if,
\begin{multline}\label{eq:R-suff-3}
   4 (R^2+1) (3R^2-1)^2 - 8 \sqrt{8} R^3 (R^2+1) \sqrt{R^2+1} \\
   - 16 R^2 (R^4+1) < 0,
\end{multline}
and indeed this is true for $R$ large enough. Summarizing, if $R> 1$ is large enough so that~\eqref{eq:R-suff-1},~\eqref{eq:R-suff-4},~\eqref{eq:R-suff-2},~\eqref{eq:R-suff-3} hold, then there exists an interval $\mathcal{A}_1 \subset \R^+$ such that~\eqref{eq:upper-bound-a-R-2},~\eqref{eq:lower-bound-a-R} and~\eqref{eq:upper-bound-a-R} hold for all $a \in \mathcal{A}_1$. The considerations for the case $x<-R$ are analogous and omitted. Finally, using the expressions~\eqref{eq:c1_c2_dw_leq_R} and~\eqref{eq:c3_c4_dw_leq_R} for the constants $c_1, c_2, c_3, c_4$, the condition~\eqref{eq:cond-c1234-simpl} reads
\begin{equation}\label{eq:condition_c1234_dw}
\begin{aligned}
&\sqrt{\frac{a}{8 C_x}} \frac{4 C_y^2}{2C_y +a} + 2 \sqrt{\frac{8C_x}{a}} \frac{2 C_y + 2a + 8 a C_x - 4 C_x}{4C_y + 2a} \\
&+ a (\psi  + 2\|y_{\rm ref}\|_\infty) + \|\dot y_{\rm ref}\|_\infty < \sqrt{\frac{a}{8 C_x}} (C_y + \tfrac12 a).
\end{aligned}
\end{equation}
This condition leads to a refined interval $\mathcal{A}_2 = [a_{\min}, a_{\max}] \subset \mathcal{A}_1$ of admissible control parameters $a$. In Figure~\ref{fig:dw_potential} B, we show the upper and lower bounds $a_{\min}$ and $a_{\max}$ as a function of $C_x$. These considerations also show that for a given $a > 0$, there is an interval of model parameters $C_x$ such that the application of funnel control with control strength~$a$ is feasible for all $C_x$ in that interval.

\subsection{Numerical Results}

For the numerical validation of our results, we consider the specific setting $C_x = 1.5, C_y = 3.0$, which also satisfy~\eqref{eq:cond-Cx-Cy}, thus simpifying the first constant to $c_1 = 4C_y + 2a$. We verify that for $R = 10.0$, the conditions~\eqref{eq:R-suff-1},~\eqref{eq:R-suff-4},~\eqref{eq:R-suff-2},~\eqref{eq:R-suff-3} are satisfied, while noting that this value of $R$ is so large that it suffices to consider $V=V_{\rm dw}$ in the numerical simulations. The conditions~\eqref{eq:upper-bound-a-R-2},~\eqref{eq:lower-bound-a-R},~\eqref{eq:upper-bound-a-R}, and~\eqref{eq:condition_c1234_dw} lead to admissible values of $a$ in
\[
    \mathcal{A}_2 = [606, 676],
\]
based on which we choose the minimal control strength $a = 606$. Finally, we determine $\alpha$ for this choice of $a$ according to~\eqref{eq:cond-c1c3}, obtaining $\alpha \approx 7.18$.

We then apply the funnel controller~\eqref{eq:fun-con} to track the reference signal under the dynamics of~$20$ independent trajectories, simulated by the Euler-Maruyama scheme at elementary integration time step $10^{-4}$. At each time step, we calculate the outputs $y_1(t), y_2(t)$ required to compute the feedback control by averaging over these~$20$ trajectories.

With these settings, the funnel controller~\eqref{eq:fun-con} applied to the SDE~\eqref{eq:sde} with output~\eqref{eq:output} achieves an impressive tracking performance. We confirm in Figures~\ref{fig:dw_tracking}~A and~B that there is almost no difference between the prescribed mean values and the empirical means of the controlled trajectories. In fact, the norm $\|e(t)\|$ of the error vector remains significantly smaller than the funnel boundary throughout the simulation horizon, as shown in Figure~\ref{fig:dw_tracking}~D. The required control action $Au(t)$ is of the same order of magnitude as $a$, as shown in Figure~\ref{fig:dw_tracking}~C, which confirms an outstanding controller performance.

Lastly, we show that the provided interval for $\mathcal{A}_2$ based on theoretical guarantees is actually quite conservative. We repeat the above experiment with $a = 5.0$, while all other settings remain unchanged. The results are shown in Figure~\ref{fig:dw_tracking_min}. We find that the distance between the tracking error and the funnel boundary is now reduced, also resulting in a significantly larger, but still acceptable standard deviation. The control action $Au(t)$, on the other hand, is reduced by one to two orders of magnitude.

\begin{figure}
\includegraphics[width=.45\textwidth]{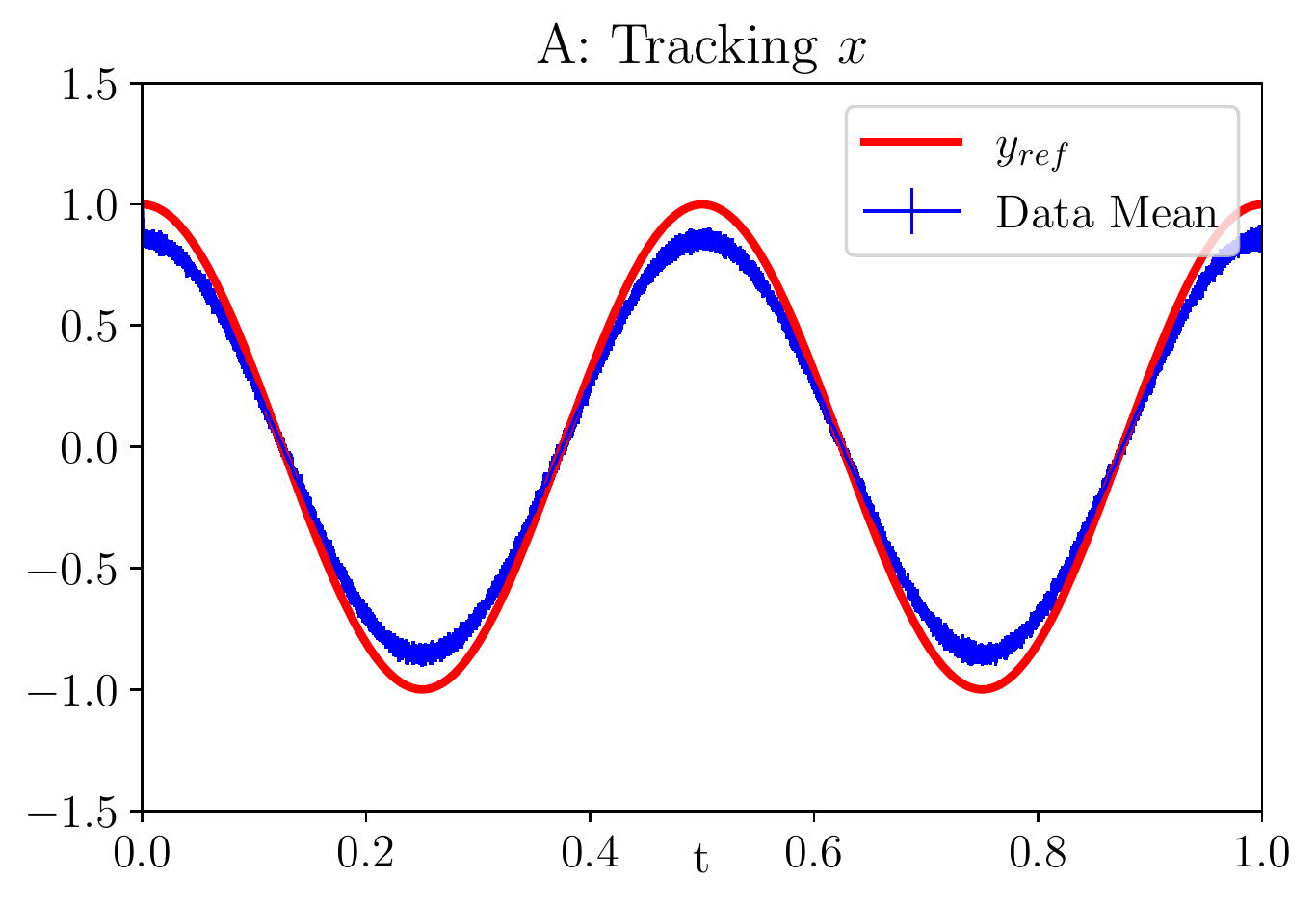}
\includegraphics[width=.45\textwidth]{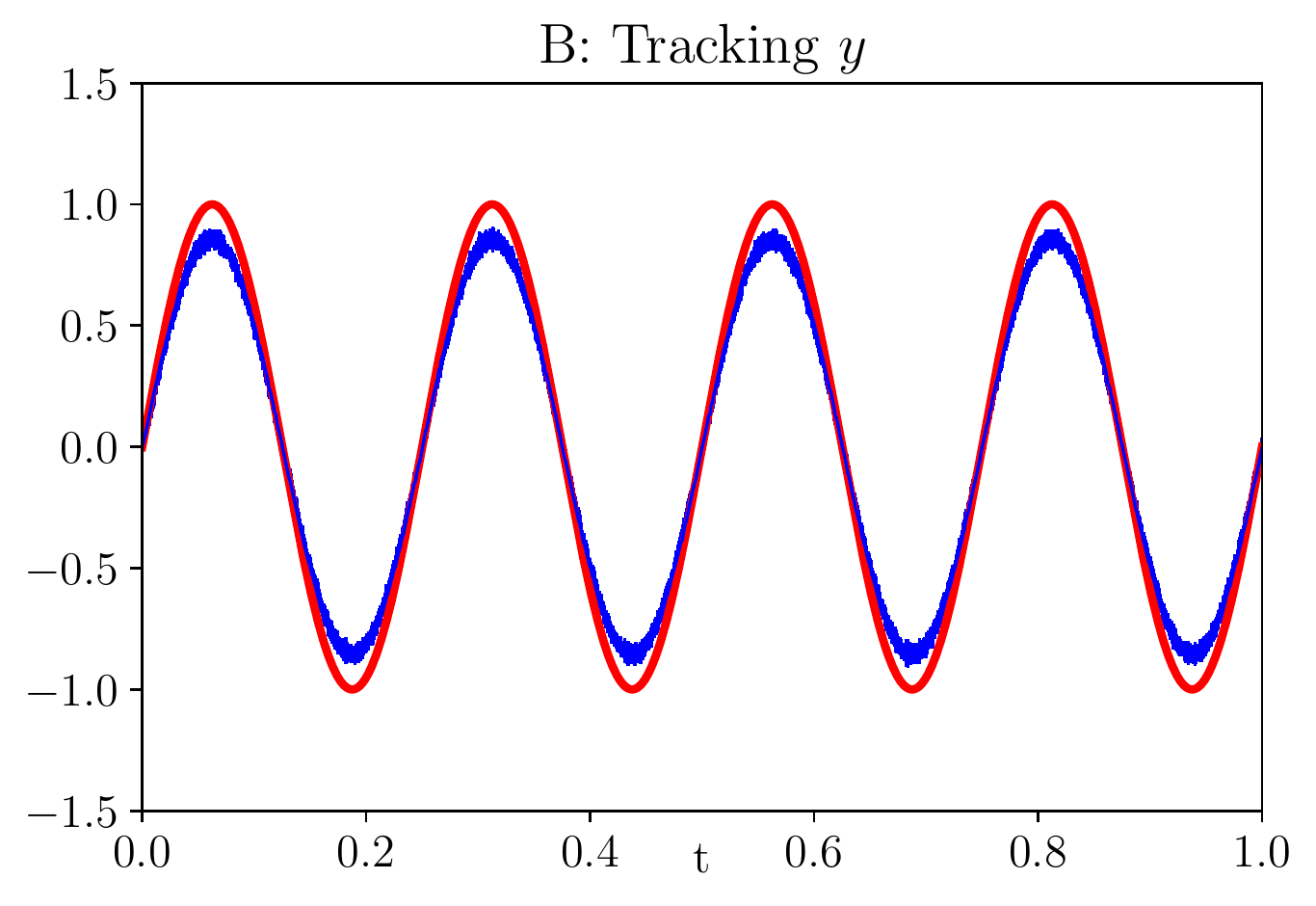}
\includegraphics[width=0.45\textwidth]{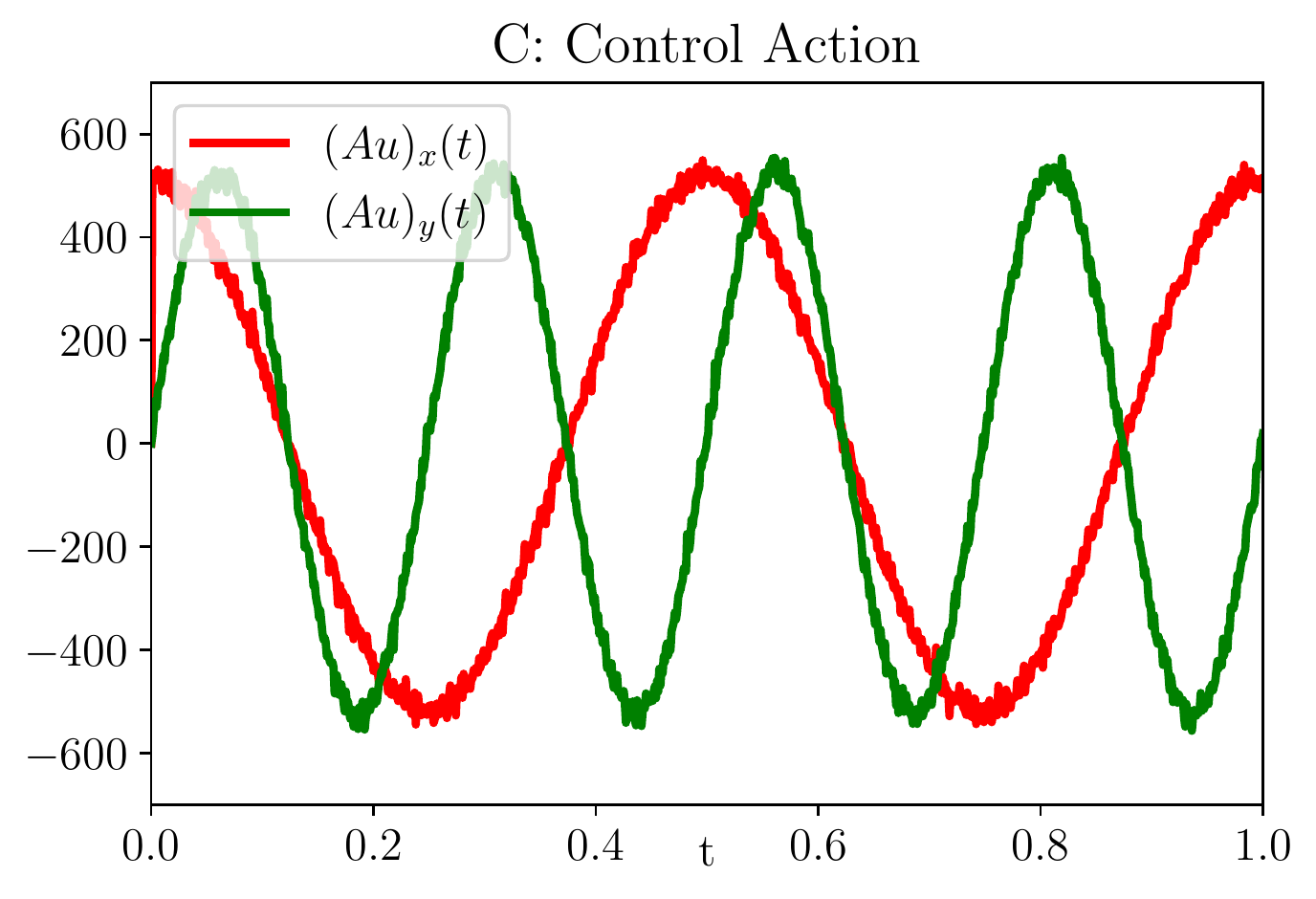}
\includegraphics[width=0.45\textwidth]{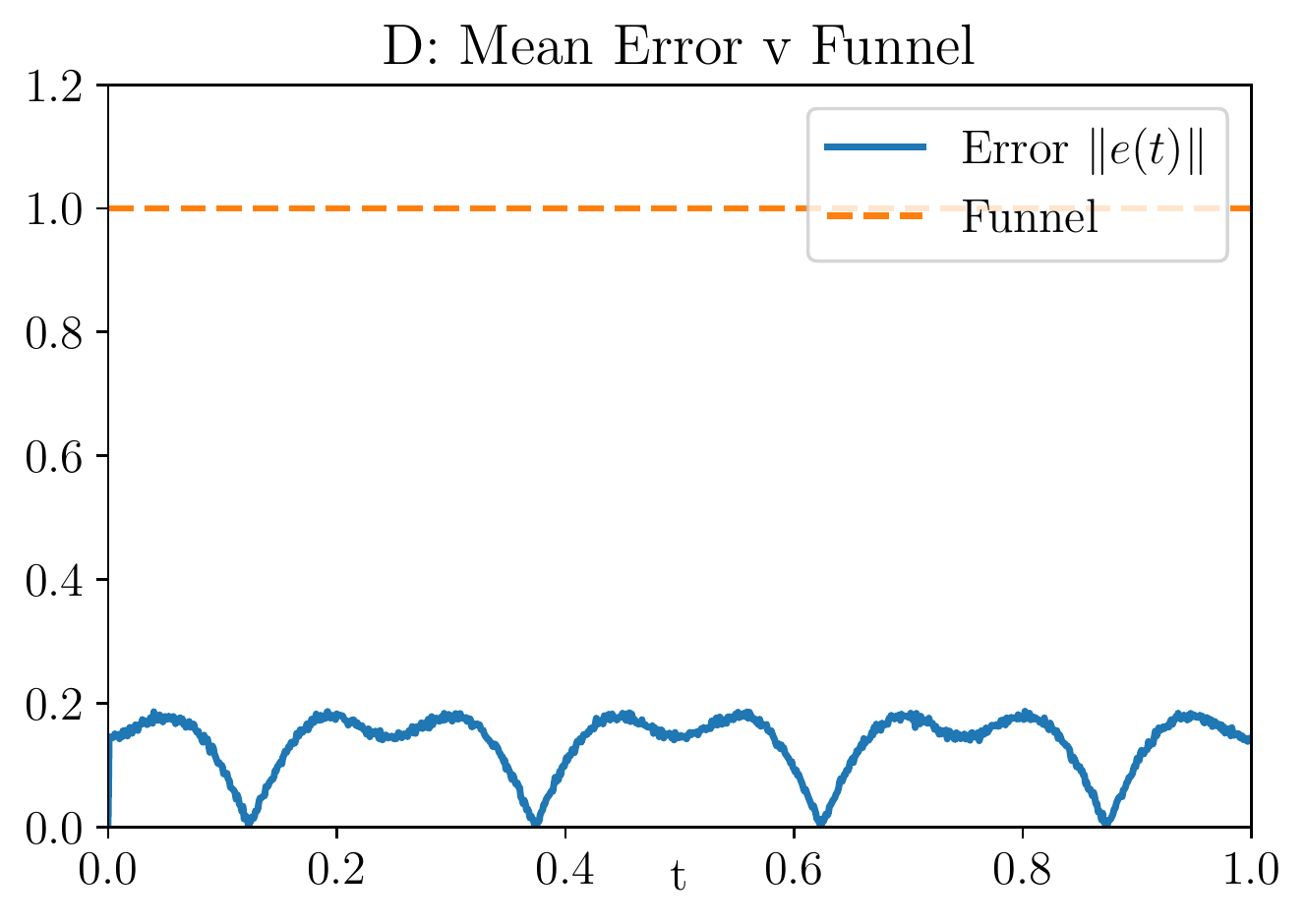}
\caption{(A, B): Comparison of reference signal $y_{\mathrm{ref}}(t)$ (red) and empirical mean value (blue), estimated from $20$ independent simulations. The width of the blue line represents the standard error over all $20$ simulations. Panel A is for the $x$-coordinate, B for the $y$-coordinate. (C): Control action $Au(t)$ for $x$-coordinate (red) and $y$-coordinate (green). (D) Norm of the error $e(t)$ between the reference signal and the empirical mean vector as a function of time. \label{fig:dw_tracking}}
\end{figure}

\begin{figure}
\includegraphics[width=.45\textwidth]{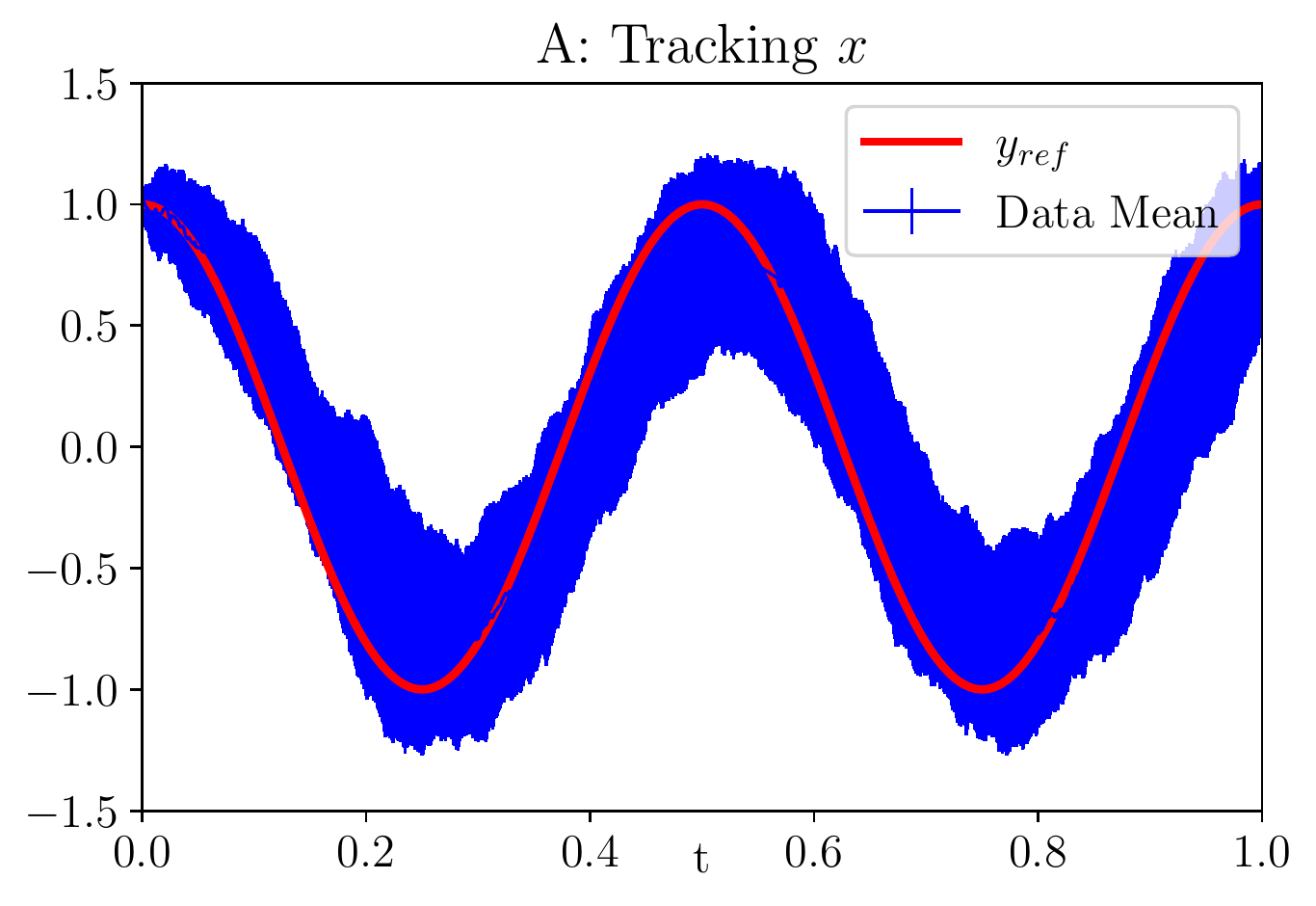}
\includegraphics[width=.45\textwidth]{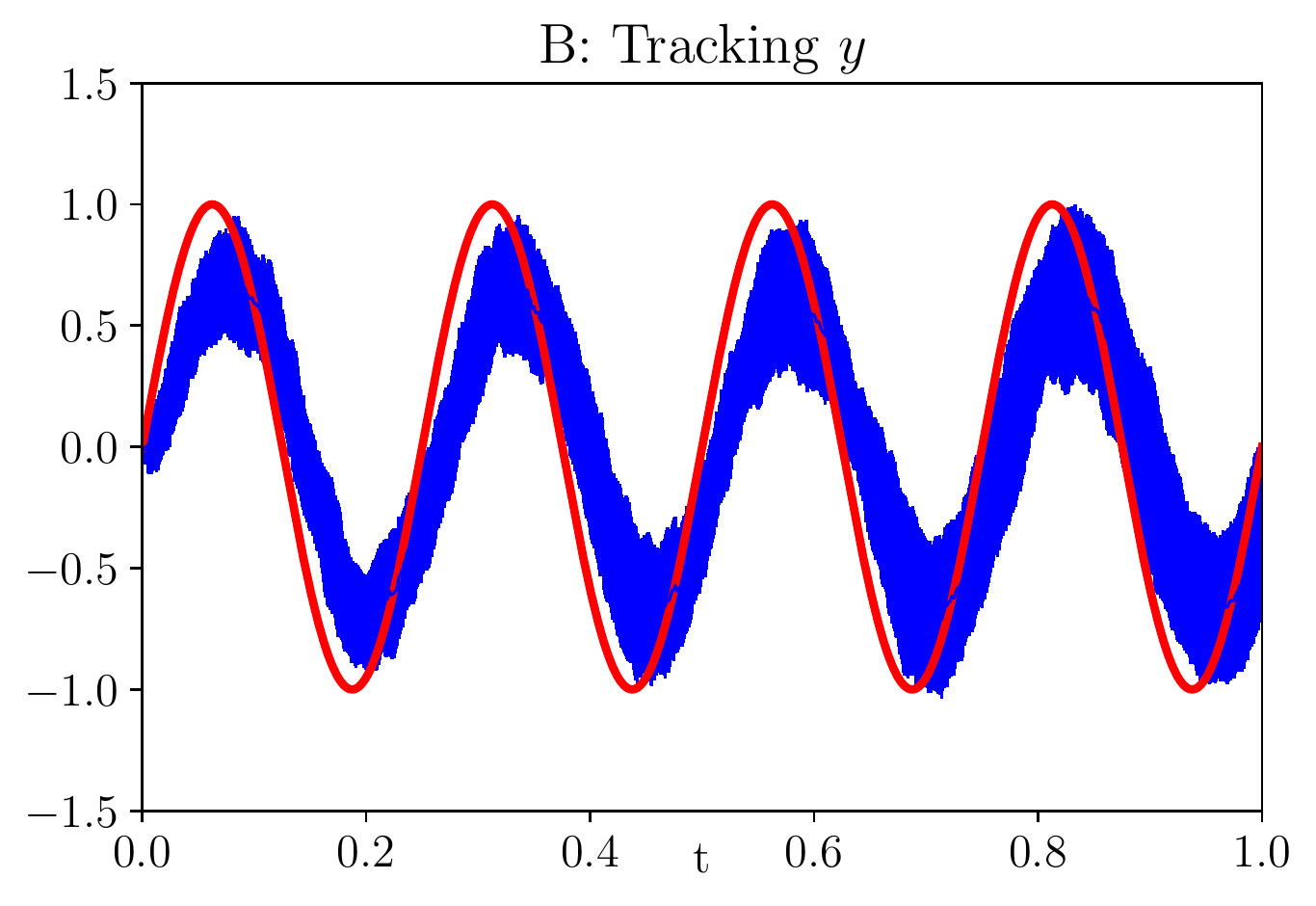}
\includegraphics[width=0.45\textwidth]{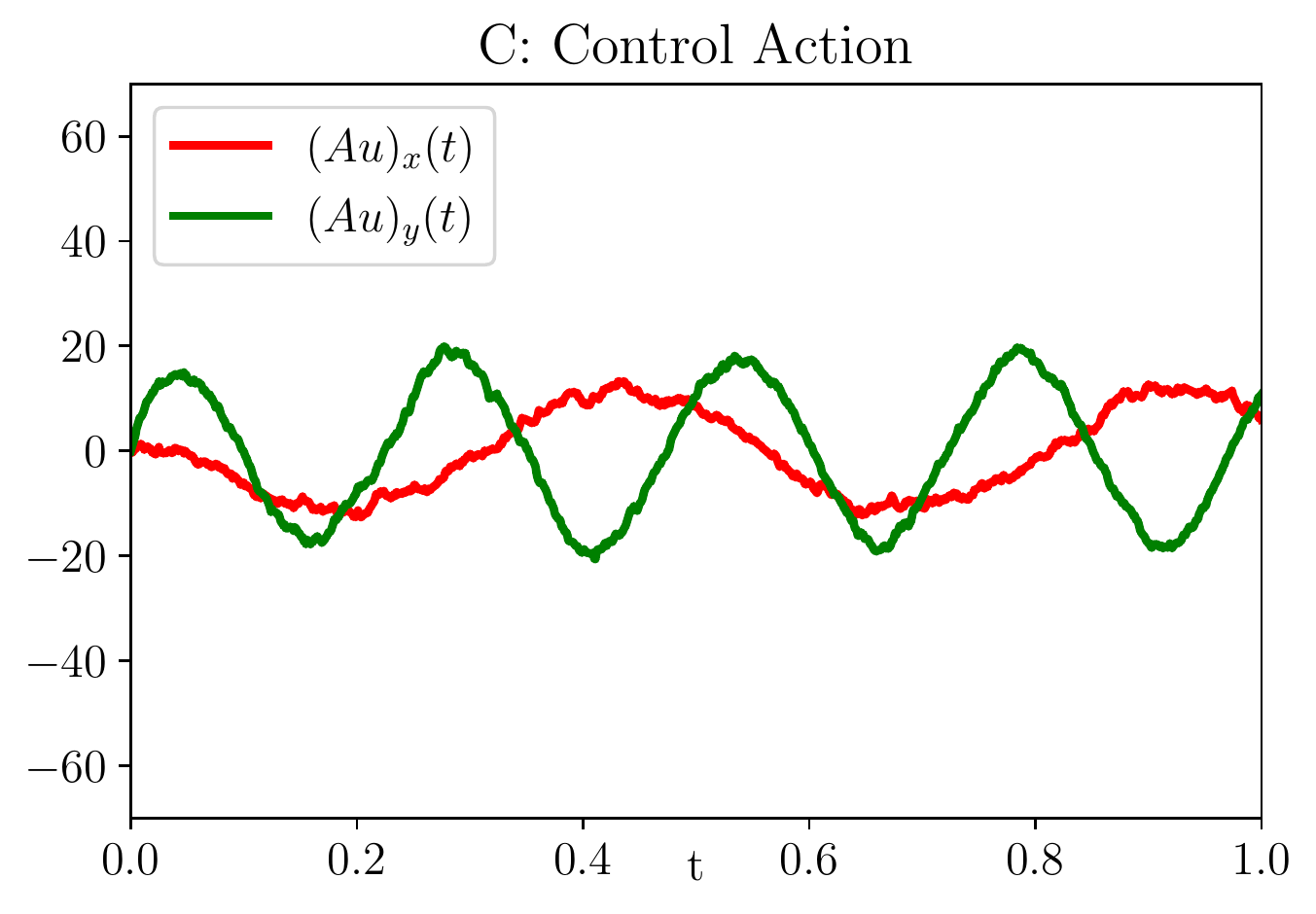}
\includegraphics[width=0.45\textwidth]{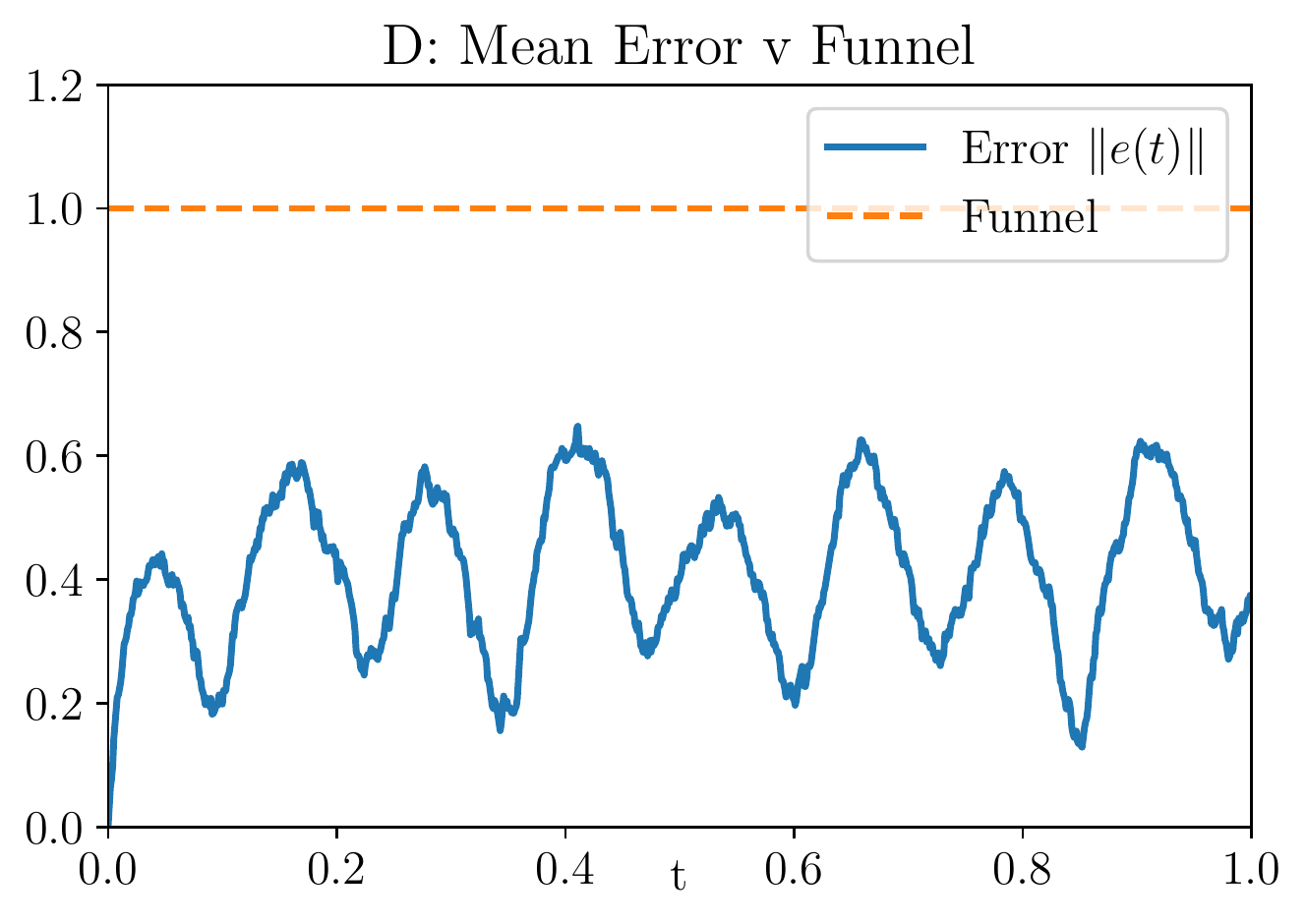}
\caption{Same content as Fig.~\ref{fig:dw_tracking}, but for control strength $a = 5.0$. Note that the scale in panel C is different for the purpose of visualization. \label{fig:dw_tracking_min}}
\end{figure}

\section{Conclusion}
\label{sec:Conclusion}

In the present paper we proposed a new conceptual approach to the sampling problem of SDEs of Langevin type, which is based on the solution of tracking problems using funnel control. We have derived structural conditions on the potential energy which guarantee that funnel control is feasible, and the evolution of the tracking error for the mean values will remain within a prescribed performance funnel. The numerical example of a double-well potential illustrates these theoretical findings, and shows that excellent tracking performance can be achieved using the parameter setting certified by our main result Theorem~\ref{Thm:main}.

However, we have also seen that verification of the theoretical conditions can be quite tedious already for simple potentials. Moreover, the range of certified parameter settings turned out to be quite narrow for the double-well example, while satisfactory performance could also be shown to be possible outside the certified regime. Future research will therefore concentrate on deriving less restrictive conditions. Moreover, the use of output functions different from the mean value, as well as leveraging the capabilities of funnel control for the purpose of enhanced sampling, will be topics of future research.


\bibliographystyle{elsarticle-harv}
\bibliography{Funnel_SDE}

\end{document}